\documentclass{hjm1}
      \usepackage{amsmath, amssymb}

       \def\diag{\mathop{\rm diag}\nolimits}
       \def\Ker{\mathop{\rm Ker}\nolimits}
       \def\Rank{\mathop{\rm Rank}\nolimits}
       \def\Spec{\mathop{\rm Spec}\nolimits}

\begin{document}

   \author{Dan Kucerovsky}
   \address{D. Kucerovsky, Department of Mathematics, University of New Brunswick, Fredericton, NB E3B 5A3, Canada}
   \email{dkucerov@unb.ca}

   \author{Aydin Sarraf}

   \address{A. Sarraf, Department of Mathematics, University of New Brunswick, Fredericton, NB E3B 5A3, Canada}

\email{Aydin.Sarraf@unb.ca}

   \title[Schur Maps]{Schur multipliers and matrix products}

\subjclass{15B99,47L80, 15B48}
   \begin{abstract}
      We give necessary and sufficient conditions for a Schur map to be a homomorphism, with some generalizations to the infinite-dimensional case. In the finite-dimensional case, we find that a Schur multiplier $[a_{ij}]\colon M_{n}(  \mathbb{C}  ) \rightarrow M_{n}(  \mathbb{C}  )$ distributes over matrix multiplication if and only if  $a_{ij}=f(i)/f(j)$ for some $f\colon {\mathbb N}\longrightarrow {\mathbb C^{*}},$
      where $\mathbb{C}^*$ is the set of nonzero complex numbers.  In addition, it is shown that it is possible to enumerate all $\ast$-preserving multiplicative Schur maps on $M_{n}(  \mathbb{R}  )$. We also study the relation of Schur map to the extreme points of certain sets.
   \end{abstract}

   \keywords{Schur product, Schur multiplier, Schatten-von Neumann class, bimodule map, extreme point.}

   \maketitle

   \section{Introduction}

Let $M_{n}(  \mathbb{C}  )$ be the algebra of all $n\times n$ complex matrices. The Schur product
$A\circ B$ of two matrices $A =(a_{ij})$ and $B =(b_{ij})$ in $M_{n}(  \mathbb{C}  )$ is defined by $A \circ B:=(a_{ij} b_{ij})$ and  given $A\in M_{n}(  \mathbb{C}  )$, the Schur map $S_{A}$ on $M_{n}(  \mathbb{C}  )$ is defined
by $S_{A}(B):=A \circ B$ for all $B\in M_{n}(  \mathbb{C}  )$. Although a Schur map certainly appears unlikely to be a homomorphism with respect to ordinary matrix multiplication, one can verify that, for example:$$  \left(\begin{smallmatrix}a&b\\c&d\\ \end{smallmatrix}\right)\mapsto \left(\begin{smallmatrix}a&ib\\-ic&d\\ \end{smallmatrix}\right)$$ is both a Schur map and a $*$-homomorphism. In this article, we develop several characterizations of multiplicative Schur maps. In the infinite-dimensional case, there is a natural notion of a Schur multiplier of a Schatten ideal of $B(H).$ If the Hilbert space is taken to be finite-dimensional, a Schur multiplier is a Schur map.

In our discussion, $J$ denotes the $n\times n$ matrix with all entries equal to $1$, and $I$ is the usual identity matrix. We use the notation $E_{11},E_{12}, . . . ,$ $E_{nn}$ for the standard basis of $M_{n}(  \mathbb{C}  )$ or their infinite--dimensional analogues, as  in section $4$. If A has no zero entries then by $A^{[-1]}$ we mean the matrix whose entries are reciprocals of the entries of $A$. By $\Spec(A)$ we mean the set of all eigenvalues of the matrix $A.$ The notation $A^{\ast}$ stands for the conjugate transpose of $A$. By a positive matrix, we always mean a Hermitian matrix with nonnegative spectrum. Throughout this paper, we generally exclude the zero Schur map from the set of Schur maps in order to avoid some trivial cases in the statements of our results. In section $2$, we characterize multiplicative Schur maps over $M_{n}(  \mathbb{C}  )$ and it turns out that in the case of positive Schur maps, the entries of the corresponding matrix are nothing but the extreme points of the closed unit disk, thus, are necessarily scalars of modulus 1. In section $3$, we enumerate all positive Schur maps over $M_{n}(  \mathbb{R}  )$. In section $4$, we try to generalize our results to the infinite--dimensional setting. Finally, in section $5$ we explore the relation of Schur maps to the extreme points of certain sets.

\section{Characterization in finite dimension}
A map $\Phi$ from $M_{n}(  \mathbb{C}  )$ to $M_{n}(  \mathbb{C}  )$ is called \textit{positive} if it maps the positive matrices in $M_{n}(  \mathbb{C}  )$ to  positive matrices in $M_{n}(  \mathbb{C}  )$.

\begin{thm}\label{main}
Let $S_{A}:  M_{n}(  \mathbb{C}  ) \longrightarrow M_{n}(  \mathbb{C}  )$ be a nonzero Schur map. Then the following are equivalent:

\begin{enumerate}
\item $S_{A}$ is multiplicative,
\item $S_{A}$ is spectrum preserving,
\item $\Spec(A)= \{0, n\}$ where $0$ has multiplicity $n-1$ and the diagonal entries of $A$ are equal to $1$,

\item $A$ is of rank one with diagonal entries equal to $1$, and

\item $a_{ij}$ = $a_{ik}$ $a_{kj}$  and $a_{ii}=1$ for all $1 \leq i,j,k\leq n$.
\end{enumerate}

\end{thm}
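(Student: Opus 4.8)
The plan is to make the relation set in (5)---equivalently the factorization $a_{ij}=f(i)/f(j)$---the hub through which the other conditions pass, and to connect the two spectral statements (2) and (3) to it by evaluating $S_A$ on a handful of test matrices. I would begin with (1)$\Leftrightarrow$(5). Feeding standard matrix units into multiplicativity is immediate: since $E_{ik}E_{kj}=E_{ij}$ and $S_A(E_{pq})=a_{pq}E_{pq}$, the identity $S_A(E_{ik}E_{kj})=S_A(E_{ik})S_A(E_{kj})$ reads off as $a_{ij}=a_{ik}a_{kj}$ for all $i,j,k$. The one delicate point is getting $a_{ii}=1$ rather than $a_{ii}=0$: the choice $B=C=E_{ii}$ gives $a_{ii}^{2}=a_{ii}$, and then the hypothesis that $S_A$ is \emph{nonzero} lets me propagate a single nonzero entry $a_{pq}$ through $a_{pq}=a_{pk}a_{kq}$, forcing row $p$, column $q$, and then every entry to be nonzero, which pins every diagonal entry to $1$. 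The converse is a termwise expansion: $\bigl(S_A(B)S_A(C)\bigr)_{ij}=\sum_k a_{ik}a_{kj}\,b_{ik}c_{kj}$, and (5) collapses $a_{ik}a_{kj}$ to $a_{ij}$, recovering $a_{ij}(BC)_{ij}$.

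Next, (5)$\Leftrightarrow$(4) is pure algebra: setting $f(i):=a_{i1}$ and using $a_{ij}=a_{i1}a_{1j}$ together with $a_{1j}a_{j1}=a_{11}=1$ gives $a_{ij}=f(i)/f(j)$ with every $f(i)\neq 0$, which is exactly a rank-one matrix with unit diagonal (the outer product $A=uv^{T}$ with $u_i=f(i)$, $v_j=1/f(j)$); conversely a rank-one $A=uv^{T}$ with $a_{ii}=u_iv_i=1$ forces $a_{ij}=u_i/u_j$. The payoff of this normal form is that I can then identify $S_A$ concretely: with $D:=\diag(f(1),\dots,f(n))$ one has $S_A(B)=DBD^{-1}$, so $S_A$ is conjugation by an invertible matrix and hence spectrum preserving, which gives (4)$\Rightarrow$(2).

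For (2)$\Rightarrow$(5) I would run the test-matrix idea in reverse, and here even preservation of the spectrum \emph{as a set} suffices. Evaluating on $I$ gives $\diag(a_{11},\dots,a_{nn})$ with spectrum $\{1\}$, so $a_{ii}=1$. Evaluating on the symmetric matrix $E_{ij}+E_{ji}$, whose spectrum is $\{1,-1,0,\dots,0\}$, produces a $2\times 2$ block with eigenvalues $\pm\sqrt{a_{ij}a_{ji}}$, forcing $a_{ij}a_{ji}=1$; in particular no entry vanishes. Evaluating on the $3$-cycle $E_{ij}+E_{jk}+E_{ki}$, whose nonzero block is a cyclic permutation with eigenvalues the cube roots of unity, produces a block with characteristic polynomial $\lambda^{3}-a_{ij}a_{jk}a_{ki}$, forcing $a_{ij}a_{jk}a_{ki}=1$; combined with $a_{ik}a_{ki}=1$ this is exactly $a_{ik}=a_{ij}a_{jk}$, i.e.\ (5).

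Finally, (3)$\Leftrightarrow$(4) is rank--nullity bookkeeping: $\Rank(A)=1$ is the same as $\dim\Ker(A)=n-1$, i.e.\ eigenvalue $0$ of multiplicity $n-1$, while the surviving eigenvalue equals $\Tr(A)=\sum_i a_{ii}=n$. I expect this equivalence to be the main obstacle---not for computational reasons, but because it is the one place where the meaning of ``multiplicity'' is decisive. With algebraic multiplicity alone the condition is too weak: a non-diagonalizable matrix sharing the characteristic polynomial $\lambda^{n-1}(\lambda-n)$ of $J$ can have rank $2$ while still having unit diagonal and $\Spec$ equal to $\{0,n\}$, so it would satisfy (3) without satisfying (4). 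The clause must therefore be read as the \emph{geometric} multiplicity of $0$ being $n-1$ (equivalently, $A$ diagonalizable), which is precisely $\Rank(A)=1$. Read this way, (3) feeds back to (4), and the scheme (1)$\Leftrightarrow$(5)$\Leftrightarrow$(4), (4)$\Rightarrow$(2)$\Rightarrow$(5), (4)$\Leftrightarrow$(3) closes the loop and yields the equivalence of all five statements.
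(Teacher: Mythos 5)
Your proof is correct, but it is organized quite differently from the paper's. The paper proves a single cycle $(i)\Rightarrow(ii)\Rightarrow(iii)\Rightarrow(iv)\Rightarrow(v)\Rightarrow(i)$ and leans on structural results: simplicity of $M_n(\mathbb{C})$ plus the fact that every automorphism is inner to get $(i)\Rightarrow(ii)$, and Aupetit's theorem on spectrum-preserving linear maps together with trace preservation to get $(ii)\Rightarrow(iii)$. You instead make $(v)$ the hub: $(i)\Leftrightarrow(v)$ by feeding matrix units into multiplicativity (your propagation of a single nonzero entry through $a_{pq}=a_{pk}a_{kq}$ is a clean elementary substitute for the paper's simplicity argument), $(v)\Leftrightarrow(iv)$ via the factorization $a_{ij}=f(i)/f(j)$, $(iv)\Rightarrow(ii)$ by exhibiting $S_A$ as conjugation by $\diag(f(1),\dots,f(n))$, and $(ii)\Rightarrow(v)$ by testing on $I$, on $E_{ij}+E_{ji}$, and on $3$-cycles. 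The test-matrix argument is the most substantial departure: it is self-contained, needs only preservation of the spectrum as a set, and avoids both Aupetit's result and the paper's implicit use of trace preservation (which itself presupposes multiplicities are preserved). What the paper's route buys is brevity and the automatic identification of $S_A$ as an inner automorphism; what yours buys is elementariness and robustness under the weaker set-theoretic reading of ``spectrum preserving.''

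Your remark on condition $(iii)$ deserves emphasis: you have put your finger on a real soft spot. The paper's step $(iii)\Rightarrow(iv)$ asserts that ``rank is the number of nonzero eigenvalues, with multiplicity,'' which fails for non-diagonalizable matrices; as you note, a matrix similar to $E_{11}\cdot n+E_{23}$-type examples can have characteristic polynomial $\lambda^{n-1}(\lambda-n)$, unit diagonal (by the classical prescribed-diagonal theorem), and rank $2$. So $(iii)$ must indeed be read with geometric multiplicity (equivalently, one must carry along diagonalizability, which the paper only establishes later in Corollary \ref{norm}). Your rank--nullity reading of $(iii)\Leftrightarrow(iv)$ is the correct repair, and it is the one point where your write-up is actually more careful than the source.
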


\begin{proof}
$(i)\Rightarrow(ii):$ If $S_{A}$ is multiplicative we claim that it is bijective and that the inverse is a Schur map. In other words, $A$ has a Schur inverse or equivalently $a_{ij}\neq0$  for all $1 \leq i,j\leq n$. To prove the claim, note that if there exist $i$ and $j$ such that $a_{ij}=0$ then  $S_{A}(E_{ij})=0,$ which implies that the kernel of the multiplicative map $S_A$ is nonempty. Since  $M_{n}(  \mathbb{C}  )$ is simple, this would imply that $S_A$ is in fact the zero map, which by hypothesis is not the case. Thus, the kernel is $\{0\}$, and $S_{A}$ is an automorphism, but any automorphism over $M_{n}(  \mathbb{C}  )$ is inner. In other words, there exists an invertible matrix $C$ such that $S_{A}(B)=CBC^{-1}$ for all $B\in M_{n}(  \mathbb{C}  )$. Thus,  $S_{A}$ is given by a similarity transformation, implying that the spectrum of $S_A (B)$ is equal to the spectrum of $B.$

 $(ii)\Rightarrow(iii)$: Since $S_{A}$ is spectrum preserving,  we can conclude that $\Spec(A)=\Spec(S_{A}(J))=\Spec(J).$ The spectrum of $J,$ hence of $A,$ is  $\{0, n\}.$ The kernel of  $S_{A}$ is $\{0\}$ because $S_A$ is a linear and spectrum-preserving map (see \cite[Prop. 2.1.i]{A}.) In finite dimensions, injectivity implies surjectivity, so that $S_A$ is hence surjective.  Being spectrum-preserving, $S_{A}$ preserves the trace, but the equation ${\rm Tr}(S_A(B))={\rm Tr}(B)$ means that $\sum a_{ii} b_{ii} = \sum b_{ii}$ for all choices of sequence $(b_{ii}),$ and this can only occur if $a_{ii}=1$ for all $1\leq i\leq n$. Hence, ${\rm Tr}(A)=n$ and since the spectrum of $A$ is $\{0,n\}$ we deduce that the algebraic multiplicities of $0$ and $n$ in the spectrum are respectively $n-1$ and $1$.

$(iii)\Rightarrow(iv):$ Since rank is the number of nonzero eigenvalues, with multiplicity, we have $\Rank(A)=1$.

$(iv)\Rightarrow(v):$ Since $\Rank(A)=1$, we conclude that the $j^{th}$ column of $A$ is a scalar multiple of the $k^{th}$ column, that is, $a_{ij}=\lambda_{jk}a_{ik}$ but for $i=k$ we have  $ a_{kj}=\lambda_{jk}a_{kk}=\lambda_{jk}$. Therefore, $a_{ij}$ = $a_{ik}$ $a_{kj}$  and $a_{ii}=1$ for all $1 \leq i,j,k\leq n$.

$(v)\Rightarrow(i):$ For arbitrary matrices $B$ and $C$, it follows that $$(S_{A}(B)S_{A}(C))_{ij}$$ $$ =\displaystyle\sum\limits_{k=1}^n a_{ik}b_{ik}c_{kj}a_{kj}=\displaystyle\sum\limits_{k=1}^n a_{ik}a_{kj}b_{ik}c_{kj}=\displaystyle\sum\limits_{k=1}^n a_{ij}b_{ik}c_{kj}$$ $$=(S_{A}(BC))_{ij}$$ for all $1\leq i,j \leq n$.
\end{proof}
\begin{rmk} Matrices of positive elements satisfying the condition $a_{ij}$ = $a_{ik}$ $a_{kj}$ that appears in condition $v$ of Theorem \ref{main} are in fact the so-called consistent matrices that are used in one branch of mathematical
economics (see \cite[chapter 7]{Saaty}.)  The requirement of having unit elements on the diagonal is then equivalent to the condition that the matrix is what is called a reciprocal matrix. One could say that Theorem \ref{main} characterizes multiplicative Schur maps in terms of a generalization of the  consistent reciprocal matrices.
\end{rmk}
\begin{cor} An unital Schur map $S_A\colon M_n \longrightarrow M_n$ is a homomorphism with respect to the usual matrix product if and only if the coefficients of $A$ are of the form
$a_{ij}=f(i)/f(j)$ where $f$ is a vector of nonzero complex numbers. Moreover, if $S_A$ is a homomorphism,
it is constructed as a similarity transformation by a diagonal matrix.\label{cor:diag}\end{cor}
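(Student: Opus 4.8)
The plan is to deduce the corollary directly from Theorem~\ref{main}. Being unital, the Schur map satisfies $S_A(I)=I$, which is equivalent to $a_{ii}=1$; and \emph{homomorphism} here means multiplicative with respect to the matrix product. Thus the hypotheses place us exactly in the situation of Theorem~\ref{main}, and I would first invoke the equivalence $(i)\Leftrightarrow(v)$ to replace the statement ``$S_A$ is a homomorphism'' by the coordinate conditions $a_{ij}=a_{ik}a_{kj}$ and $a_{ii}=1$ for all $i,j,k$. I would also recall, from the proof of $(i)\Rightarrow(ii)$, that multiplicativity forces every entry $a_{ij}$ to be nonzero, so that all the divisions below are legitimate.

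For the forward implication I would manufacture the function $f$ from a single reference column. Fixing the index $1$, set $f(i):=a_{i1}$, which is a nonzero complex number for each $i$. Taking $k=1$ in condition (v) gives $a_{ij}=a_{i1}a_{1j}$, while applying (v) with intermediate index $j$ gives $1=a_{11}=a_{1j}a_{j1}$, hence $a_{1j}=1/a_{j1}$. Combining these yields $a_{ij}=a_{i1}/a_{j1}=f(i)/f(j)$, as required. Conversely, if $a_{ij}=f(i)/f(j)$ for some $f\colon\{1,\dots,n\}\to\mathbb{C}^{\ast}$, then $a_{ik}a_{kj}=(f(i)/f(k))(f(k)/f(j))=f(i)/f(j)=a_{ij}$ and $a_{ii}=1$, so condition (v) holds and Theorem~\ref{main} makes $S_A$ multiplicative; unitality is automatic since $a_{ii}=1$.

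For the ``moreover'' clause I would exhibit the diagonal similarity explicitly. Put $D:=\diag(f(1),\dots,f(n))$, which is invertible because each $f(i)\neq0$. Then $(DBD^{-1})_{ij}=f(i)\,b_{ij}\,f(j)^{-1}=(f(i)/f(j))\,b_{ij}=a_{ij}b_{ij}=(S_A(B))_{ij}$ for every $B$, so $S_A(B)=DBD^{-1}$ and $S_A$ is the similarity transformation by the diagonal matrix $D$. I do not expect a genuine obstacle here: the entire argument is bookkeeping on top of Theorem~\ref{main}, and the only point requiring care is the nonvanishing of the entries (equivalently, the invertibility of $D$), which is exactly what guarantees that the reference-column construction of $f$ is well defined.
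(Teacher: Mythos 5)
Your proposal is correct and follows essentially the same route as the paper: both invoke the equivalence $(i)\Leftrightarrow(v)$ of Theorem~\ref{main}, define $f(i):=a_{i1}$ to get $a_{ij}=a_{i1}/a_{j1}$, and verify the converse by exhibiting $S_A$ as conjugation by $\diag(f(1),\dots,f(n))$. The only difference is that you spell out the intermediate steps (nonvanishing of entries, $a_{1j}=1/a_{j1}$) slightly more explicitly than the paper does.
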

\begin{proof}
If $S_A$ is a homomorphism, then by the above Theorem $a_{ij}$ = $a_{ik}$ $a_{kj}$  where $a_{ii}=1$ and all the $a_{ij}$ are nonzero, so that $$a_{ik}=\frac{a_{i1}}{a_{k1}}.$$
We can set $f(i):=a_{i1}.$ Conversely, if  $a_{ij}=f(i)/f(j)$ then it follows that for any matrix $B$:
$$S_A (B)=\left(\begin{smallmatrix}f(1)&&\\&\ddots&\\&&f(n)\\\end{smallmatrix}\right)
                                            B
\left(\begin{smallmatrix}\frac{1}{f(1)}&&\\&\ddots&\\&&\frac{1}{f(n)}\\\end{smallmatrix}\right) $$
where the operation on the right is ordinary matrix multiplication. But, then $S_A$ is a homomorphism with respect to matrix multiplication.
\end{proof}

\begin{cor}\label{norm}
If $S_{A}:  M_{n}(  \mathbb{C}  ) \longrightarrow M_{n}(  \mathbb{C}  )$ is multiplicative then the following hold:
\begin{enumerate}
\item $A^{\ast}=\overline{A^{[-1]}}$ and $A$ is diagonalizable.

\item In general, $\|A\|\geq n,$ and if $A^{*}=A$ then $\|A\|=n$.

\item If $A=A^*$ then $S_{A}$ is numerical range preserving.

\end{enumerate}

\end{cor}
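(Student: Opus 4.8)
The plan is to read off all three parts from the structural description of a multiplicative Schur map supplied by Theorem \ref{main} and Corollary \ref{cor:diag}: that $\Spec(A) = \{0, n\}$ with $0$ of multiplicity $n-1$, that $A$ has rank one with unit diagonal, that $a_{ij} = a_{ik}a_{kj}$ with $a_{ii} = 1$, and (since $a_{ii}=1$ forces $S_A(I) = I$, so $S_A$ is unital) that $a_{ij} = f(i)/f(j)$ for some nonvanishing $f$. For part (1), I would set $i = j$ in $a_{ij} = a_{ik}a_{kj}$ to get $a_{ij}a_{ji} = a_{ii} = 1$, hence $a_{ji} = 1/a_{ij}$; this says exactly that the transpose $A^{T}$ equals the Schur reciprocal $A^{[-1]}$, and conjugating gives $A^{\ast} = \overline{A^{T}} = \overline{A^{[-1]}}$. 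Diagonalizability is then immediate from the spectral data: the geometric multiplicity of the eigenvalue $0$ is $\dim \Ker A = n - \Rank A = n-1$, which matches its algebraic multiplicity, while the simple eigenvalue $n$ contributes a one-dimensional eigenspace, so the eigenspaces together span the whole space.

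For part (2), the spectral radius of $A$ equals $n$ because $\Spec(A) = \{0, n\}$, and the operator norm always dominates the spectral radius, so $\|A\| \geq n$. When $A = A^{\ast}$ the matrix is normal, and for a normal matrix the operator norm coincides with the spectral radius, forcing $\|A\| = n$.

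For part (3), the real content is to identify the form of $S_A$ under the Hermitian hypothesis. Combining $A = A^{\ast}$ with the identity $A^{\ast} = \overline{A^{[-1]}}$ from part (1) yields $|a_{ij}| = 1$ for every entry; substituting this into $a_{ij} = f(i)/f(j)$ from Corollary \ref{cor:diag} forces all the moduli $|f(i)|$ to be equal, so after rescaling $f$ may be taken unimodular and $A = uu^{\ast}$ with $u$ a vector of unimodular entries. The diagonal matrix $U := \diag(u(1), \ldots, u(n))$ is then unitary, and an entrywise check shows $S_A(B) = A \circ B = UBU^{\ast}$. Since the numerical range is invariant under unitary conjugation, $W(S_A(B)) = W(B)$ for all $B$, which is the claim.

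I expect this last part to be the main point of the argument: everything hinges on recognizing that the Hermitian hypothesis collapses the Schur map into conjugation by a diagonal unitary, after which unitary invariance of the numerical range finishes the proof. Parts (1) and (2) are essentially bookkeeping with the relation $a_{ij}a_{ji}=1$ and with the standard inequality between operator norm and spectral radius, so the only genuinely structural step is the passage $A = A^{\ast} \Rightarrow |a_{ij}| = 1 \Rightarrow A = uu^{\ast}$.
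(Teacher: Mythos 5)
Your proof is correct and follows essentially the same route as the paper: the entrywise relation $a_{ij}a_{ji}=1$ for part (1), the spectral-radius bound for part (2), and the reduction of $S_A$ to conjugation by a diagonal unitary for part (3). The only cosmetic differences are that the paper gets diagonalizability from the annihilating polynomial $x(x-n)$ rather than by matching geometric and algebraic multiplicities, and it cites Pellegrini for numerical-range preservation where you verify unitary invariance directly; both variants are fine.
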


\begin{proof}

$(i)$ By Theorem \eqref{main}, $1=a_{ii}=a_{ij}a_{ji}$ which implies $a_{ij}^{\ast}=\overline{a_{ji}}=\dfrac{1}{\overline{a_{ij}}}$ for all $1 \leq i,j\leq n$. Since the  polynomial $p(x)=x(x-n)$ has distinct roots and annihilates $A$, it follows that the minimal polynomial of $A$ has distinct roots, and thus $A$ is diagonalizable.

$(ii)$ Since $S_A$ is multiplicative, Theorem \ref{main} shows that the spectrum of $A$ is $\{0,n\},$ so the spectral radius $\rho(A)$ of $A$ is $n.$ Therefore $\|A\|\geq\rho(A)=n.$ If $A=A^{\ast}$, then $\|A\|=\rho(A)=n$.


$(iii)$ As in the proof of Corollary \ref{cor:diag}, we have that $S_A(x)=\Lambda x \Lambda^{-1},$ where the diagonal entries of the diagonal matrix $\Lambda$ come from the first row of $A.$ But by part $i$ and the fact that $A^*=A$ we conclude that $\overline{a_{ij}}=1/a_{ij}$. This property carries over to the entries of $\Lambda$ so that $\overline{\Lambda}=\Lambda^{-1}.$ But then $\Lambda^*=\Lambda^{-1}$, making $\Lambda$ a unitary. We thus have $S_{A}(x)=\Lambda x \Lambda^*$ as claimed. By \cite{P} it follows that $S_{A}$ is numerical range preserving.

\end{proof}
\begin{thm}\label{mainp}

Let $S_{A}:  M_{n}(  \mathbb{C}  )\longrightarrow M_{n}(  \mathbb{C}  )$ be an unital Schur map. Then the following are equivalent:
\begin{enumerate}
\item $S_{A}$  is multiplicative and $\ast$-preserving,

\item $S_{A}$  is a completely positive isomorphism,

\item $A$ is of rank one and normal with diagonal entries equal to $1$,

\item $A$ is of rank one with unimodular entries and diagonal entries equal to $1$, and

\item $A$ is self-adjoint with $\Spec(A)= \{0, n\}$ where $0$ has multiplicity $n-1$ and $\|S_{A}\|=1$,

\item $A$ and $A^{[-1]}$ are positive matrices with diagonal entries equal to $1$.
\end{enumerate}

\end{thm}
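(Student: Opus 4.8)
The plan is to establish the six conditions cyclically, in the order $(i)\Rightarrow(ii)\Rightarrow(iii)\Rightarrow(iv)\Rightarrow(v)\Rightarrow(vi)\Rightarrow(i)$, using throughout the standing hypothesis that $S_A$ is unital, i.e.\ $a_{ii}=1$. Two preliminary observations organize everything. First, comparing $S_A(B^*)$ with $S_A(B)^*$ entrywise shows that $S_A$ is $\ast$-preserving precisely when $a_{ji}=\overline{a_{ij}}$, that is, when $A=A^*$. Second, Theorem \ref{main} already identifies multiplicativity of $S_A$ with $A$ being of rank one with unit diagonal. Thus the genuinely new content here is the interplay between the self-adjointness, normality, positivity and unimodularity conditions on the rank-one matrix $A$, and most implications reduce to short structural computations on $A$ once rank one is in hand.

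For the implications leaning on the earlier results: for $(i)\Rightarrow(ii)$ I combine $A=A^*$ with Corollary \ref{norm}(iii) to write $S_A(x)=\Lambda x\Lambda^*$ with $\Lambda$ unitary; conjugation by a single operator is completely positive, and since $\Lambda^*\Lambda=I$ this map is multiplicative and bijective, so $S_A$ is a completely positive isomorphism. For $(ii)\Rightarrow(iii)$ I use that a completely positive linear map is automatically $\ast$-preserving (every self-adjoint element is a difference of positives), so $A=A^*$ and in particular $A$ is normal; being an algebra isomorphism, $S_A$ is multiplicative, whence $A$ is rank one with unit diagonal by Theorem \ref{main}. For $(iv)\Rightarrow(v)$ I assemble three facts about a rank-one $A$ with unimodular entries and unit diagonal: the identity $a_{ij}a_{ji}=1$ from Theorem \ref{main} together with $|a_{ij}|=1$ forces $a_{ji}=\overline{a_{ij}}$, giving $A=A^*$; the spectrum is $\{0,n\}$ with $0$ of multiplicity $n-1$, since a rank-one matrix with unit diagonal has its single nonzero eigenvalue equal to $\Tr(A)=n$, as in Theorem \ref{main}; and $\|S_A\|=1$ because $S_A(x)=\Lambda x\Lambda^*$ with $\Lambda$ unitary is norm-preserving.

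The two remaining forward steps are short computations on rank-one matrices. For $(iii)\Rightarrow(iv)$ I write a normal rank-one matrix as $A=\mu\, xx^*$ (normality forces the two factors of a rank-one matrix to be proportional); the condition $a_{ii}=\mu|x_i|^2=1$ then makes $|x_i|$ constant and $\mu$ a positive real, whence $|a_{ij}|=\mu|x_i||x_j|=1$. For $(v)\Rightarrow(vi)$, a self-adjoint $A$ with $\Spec(A)=\{0,n\}$ is positive of rank one with unit diagonal, so $A=xx^*$ with $|x_i|=1$; then $a_{ij}=x_i\overline{x_j}$ has no zero entries and a direct calculation gives $A^{[-1]}=\overline{A}$, which is positive because the conjugate of a positive matrix is positive, and has unit diagonal.

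I expect $(vi)\Rightarrow(i)$ to be the main obstacle and the key idea of the proof. Here I know only that $A$ and $A^{[-1]}$ are positive with unit diagonal, and I must recover rank one. The crucial observation is that a positive matrix with unit diagonal is a correlation matrix, so $|a_{ij}|\le\sqrt{a_{ii}a_{jj}}=1$; applying the same bound to $A^{[-1]}$, whose entries are $1/a_{ij}$, gives $|a_{ij}|\ge 1$, so every entry is unimodular. Writing $A$ as a Gram matrix $a_{ij}=\langle v_j,v_i\rangle$ of unit vectors, unimodularity is exactly the equality case of Cauchy--Schwarz, forcing all the $v_i$ to be scalar multiples of one another; hence $A$ has rank one. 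Rank one with unit diagonal yields multiplicativity by Theorem \ref{main}, and positivity gives $A=A^*$, hence $\ast$-preservation, closing the cycle. The care needed is precisely in extracting unimodularity from the two positivity hypotheses simultaneously and then invoking the equality case of Cauchy--Schwarz to collapse the rank.
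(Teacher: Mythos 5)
Your proof is correct, and while most of the implications run parallel to the paper's (same cyclic order, same reliance on Theorem \ref{main} to convert rank one plus unit diagonal into multiplicativity), your argument for the key step $(vi)\Rightarrow(i)$ is genuinely different. The paper treats $S_A$ and $S_{A^{[-1]}}$ as mutually inverse unital completely positive maps and runs the Choi--Schwarz inequality twice to get equality in the Schwarz inequality, then a $2\times 2$ operator-matrix positivity argument to upgrade $\phi(a^*a)=\phi(a^*)\phi(a)$ to full multiplicativity --- essentially the abstract ``multiplicative domain'' argument, which never looks at the entries of $A$ and would survive in settings without a matrix picture. You instead exploit the Schur-multiplier structure directly: positivity of $A$ and of $A^{[-1]}$ with unit diagonals gives $|a_{ij}|\le 1$ and $|a_{ij}|\ge 1$ from the $2\times 2$ principal minors, hence unimodular entries, and then the equality case of Cauchy--Schwarz in a Gram representation collapses $A$ to rank one, after which Theorem \ref{main} finishes. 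This is shorter, entirely elementary, and arguably more illuminating about why the hypothesis on $A^{[-1]}$ is needed, at the cost of being specific to Schur maps. Your other deviations are minor improvements in economy: in $(i)\Rightarrow(ii)$ you get complete positivity from the unitary-conjugation form $S_A(x)=\Lambda x\Lambda^*$ of Corollary \ref{norm} rather than from Paulsen's positivity criterion for Schur multipliers, and in $(iv)\Rightarrow(v)$ you obtain $A=A^*$ in one line from $a_{ij}a_{ji}=1$ and $|a_{ij}|=1$, where the paper verifies $AA^*=A^*A$ by a summation identity and then invokes the spectral theorem. All steps check out; the only point worth flagging is that both you and the paper tacitly fix an interpretation of ``completely positive isomorphism'' in $(ii)$ (you read it as a completely positive algebra isomorphism, the paper as a complete order isomorphism that it then asserts is a $*$-isomorphism), but either reading is consistent with what is actually established in $(i)\Rightarrow(ii)$, so the cycle closes in both cases.
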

\begin{proof}

$(i)\Rightarrow(ii):$ Since $S_{A}$ is $\ast$-preserving, $A$ is self-adjoint and by Theorem \eqref{main} it has positive spectrum. Therefore, $A$ is positive, and by \cite[Theorem 3.7]{VP} the map $S_A$ is completely positive. On the other hand, since by Theorem \eqref{main} the map $S_{A}$ has no kernel it is, by finite dimensionality, an invertible operator. Positive, unital, multiplicative, and invertible maps of matrix algebras have inverses that are positive, unital, and multiplicative. Clearly $S_{A^{[-1]}}$ is an inverse for $S_A$. Arguing as previously, $S_{A^{[-1]}}$ is completely positive. Therefore, $S_{A}$ is a completely positive isomorphism.\vspace{.4cm}

$(ii)\Rightarrow(iii):$ If $(ii)$ holds then $S_A$ is a $*$-isomorphism of matrix algebras. The map $S_A$ thus maps a projection of rank $k$ to a projection of rank $k$. Applying $S_A$ to the orthogonal projection of rank 1 given by $\frac{1}{n}J,$ we find that $\frac{1}{n}A$ is a projection of rank 1, and is positive. Thus $A$ is positive, hence normal, and of rank 1 as a matrix. The matrix $A$ has diagonal elements $1$ since $S_A$ is unital.
\vspace{.4cm}

$(iii)\Rightarrow(iv):$ Since $A$ is normal, it has a decomposition of the form $U \Lambda U^{\ast}$ where $U$ is a unitary matrix and $\Lambda=\diag(\lambda_{1},\lambda_{2},...,\lambda_{n})$ where $\lambda_{i}$ are the eigenvalues of $A$ with the decreasing order. Since $A$ has rank one with diagonal entries equal to $1$ we can conclude by Theorem \eqref{main} that $\Lambda=\diag(n,0,...,0)$.
 Therefore, $a_{ij}=\displaystyle\sum\limits_{k=1}^n \displaystyle\sum\limits_{l=1}^n u_{il}\gamma_{lk}\overline{u_{jk}}$ but $\gamma_{lk}=0$ unless $l=k=1$ where $\gamma_{11}=\lambda_{1}=n$. Hence, $a_{ij}=nu_{i1}\overline{u_{j1}}$ which implies $1=a_{ii}=n\left\lvert u_{i1} \right\rvert^{2}$ for all $1 \leq i\leq n$. Thus, $\left\lvert a_{ij} \right\rvert^{2}=a_{ij}\overline{a_{ij}}=n^{2}\left\lvert u_{i1} \right\rvert^{2}\left\lvert u_{j1}\right\rvert^{2}=1$ for all $1 \leq i,j\leq n$.\vspace{.4cm}

$(iv)\Rightarrow(v):$ Since $A$ is of rank one with diagonal entries equal to $1$, Theorem \eqref{main} implies that $\Spec(A)= \{0, n\}$ where $0$ has multiplicity $n-1$. If $A$ has unimodular entries then $\displaystyle\sum\limits_{k=1}^n (a_{ik}\overline{a_{jk}}-a_{kj}\overline{a_{ki}})=\displaystyle\sum\limits_{k=1}^n(\frac{a_{ik}}{a_{jk}}-\frac{a_{kj}}{a_{ki}})$ for all $1 \leq i,j\leq n$. By repeated use of Theorem \eqref{main} we can conclude that

 $\displaystyle\sum\limits_{k=1}^n(\frac{a_{ik}}{a_{jk}}-\frac{a_{kj}}{a_{ki}})=\displaystyle\sum\limits_{k=1}^n\frac{a_{ik}a_{ki}-a_{kj}a_{jk}}{a_{jk}a_{ki}}=
 \displaystyle\sum\limits_{k=1}^n\frac{a_{ii}-a_{jj}}{a_{ji}}= \displaystyle\sum\limits_{k=1}^n\frac{1-1}{a_{ji}}=0$ for all $1 \leq i,j\leq n$.
 Hence, $(AA^{\ast})_{ij}=\displaystyle\sum\limits_{k=1}^na_{ik}\overline{a_{jk}}=\displaystyle\sum\limits_{k=1}^na_{kj}\overline{a_{ki}}=(A^{\ast}A)_{ij}$ for all $1 \leq i,j\leq n$. Thus, $A$ is normal with positive spectrum and the spectral theorem for normal matrices implies that $A$ is positive and it is known that for positive matrices $\| S_{A}\|=\|S_{A}\|_{cb}=max\{a_{ii}: i=1,...,n \}=1.$\vspace{.4cm}

$(v)\Rightarrow(vi):$ It is clear that $A$ is positive. Therefore, by Corollary \eqref{norm} we conclude that $A=A^{\ast}=\overline{A^{[-1]}}$ which implies $\Spec(A^{[-1]})=\Spec(A)=\{0,n\}$. Thus, $A^{[-1]}$ is also positive. Since $A$ is positive, $a_{ii} \leq\|S_{A}\|=1$. On the other hand, ${\rm Tr}(A) =n$. Therefore, the diagonal entries of $A$ are equal to $1$.\vspace{.4cm}

$(vi)\Rightarrow(i):$ If $A$ and $A^{[-1]}$ are positive with diagonal entries equal to $1$ then by \cite[Theorem 3.7]{VP} we can conclude that $S_{A}$ and $S_{A^{[-1]}}$ are completely positive and unital. Denoting these maps by $\phi$ and $\phi^{-1}$ respectively, by the Schwarz inequality \cite[Corollary 2.8]{C} we have $ \phi(a^*)\phi(a)\leq\phi(a^*a)$. By using the Schwarz inequality once more for $\phi^{-1}$ we have $ a^*a=\phi^{-1}(\phi(a^*))\phi^{-1}(\phi(a))\leq\phi^{-1}(\phi(a^*)\phi(a))$. Hence, $ \phi(a^*)\phi(a)-\phi(a^*a)=\phi(\phi^{-1}(\phi(a^*)\phi(a))-a^*a)\geq0$. Therefore, $ \phi(a^*)\phi(a)=\phi(a^*a)$.

Let us define $\phi^{(2)}\colon M_2 \otimes M_n \longrightarrow M_2 \otimes M_n $ by $\mbox{Id}\otimes \phi.$  If we identify $ M_2 \otimes M_n$ with 2-by-2 matrices of operators (block matrices) we can write
$$\phi^{(2)}\left(
\begin{array}{cc}
a & b \\
c & d \\
\end{array}
\right) =
\left(\begin{array}{cc}
\phi(a) & \phi(b) \\
\phi(c) & \phi(d) \\
\end{array}
\right).$$
 Now, if we apply the Schwarz inequality to the positive map $\phi^{(2)}$ and the element

\[
\left(
\begin{array}{cc}
a & b^{*} \\
0 & 0 \\
\end{array}
\right)\]

  we get

  $$  \left(
\begin{array}{cc}
\phi(a) & \phi(b^{*}) \\
0 & 0 \\
\end{array}
\right)^*
 \left(
\begin{array}{cc}
\phi(a) & \phi(b^{*}) \\
0 & 0 \\
\end{array}
\right)
            \leq
 \left(
\begin{array}{cc}
\phi(a^*a) & \phi(a^*b^{*}) \\
\phi(ba) & \phi(bb^*) \\
\end{array}
\right).
 $$
  Thus we conclude that the  following  matrix is operator positive:

  \[
\left(
\begin{array}{cc}
 \phi(a^*a)-\phi(a^*)\phi(a) & \phi(a^*b^*)-\phi(a^*)\phi(b^*) \\
\phi(ba)-\phi(b)\phi(a) & \phi(bb^*)-\phi(b)\phi(b^*) \\
\end{array}
\right).\]
But we have already shown using the Schwartz inequality that the entries on the diagonal of the above matrix happen to be zero, so then we have that
  \[\left(
\begin{array}{cc}
 0 & \phi(a^*b^*)-\phi(a^*)\phi(b^*) \\
\phi(ba)-\phi(b)\phi(a) & 0 \\
\end{array}
\right)\] is operator positive.
 Let $c= \phi(ba)-\phi(b)\phi(a)$ and note that an operator matrix of the form
   \[\left(
\begin{array}{cc}
 \lambda{\rm Id} & c^* \\
c & \lambda{\rm Id} \\
\end{array}
\right),\] where $\lambda$ is a non-negative scalar, is operator positive if and only if $\|c\|\leq\lambda$. This can be deduced from, for example, \cite[Lemma 3.1]{VP}. But, then it follows that $\phi(ba)-\phi(b)\phi(a)$ is zero.
\end{proof}

\begin{prop} \label{idempotent}Let $S_A\colon M_n({\mathbb C})\longrightarrow M_n({\mathbb C})$ be a multiplicative Schur map. The following are equivalent:
\begin{enumerate}
	\item The map is $*$-preserving,
	\item The matrix $A$ has norm  $\|A\|=n,$
	\item The matrix $\frac{1}{n} A$ is an orthogonal projection, and
	\item The operator norm of $S_A$ is 1.
\end{enumerate}

\end{prop}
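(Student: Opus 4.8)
The plan is to observe that all four conditions are equivalent to the single structural statement that $A$ is self-adjoint, and to prove each equivalence against condition (1) rather than chasing a long cycle. First I would record the consequences of multiplicativity supplied by Theorem \ref{main}: the matrix $A$ has rank one, $\Spec(A)=\{0,n\}$, every diagonal entry equals $1$, and $a_{ij}a_{ji}=a_{ii}=1$ for all $i,j$. In particular $S_A(I)=A\circ I=I$, so $S_A$ is automatically unital and Theorem \ref{mainp} is available. I would also note that for a Schur map $\ast$-preservation is equivalent to $A=A^{\ast}$, since $(A\circ B)^{\ast}$ and $A\circ B^{\ast}$ agree for all $B$ exactly when $a_{ij}=\overline{a_{ji}}$. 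Thus condition (1) is simply the assertion that $A$ is self-adjoint.

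Next I would handle $(1)\Leftrightarrow(3)$. A rank-one orthogonal projection is self-adjoint, giving $(3)\Rightarrow(1)$ immediately. Conversely, a self-adjoint $A$ with $\Spec(A)=\{0,n\}$ decomposes by the spectral theorem as $A=nP$, where $P$ is the rank-one spectral projection onto the eigenspace of $n$; hence $\frac{1}{n}A=P$ is an orthogonal projection. For $(1)\Leftrightarrow(2)$, the forward direction is exactly Corollary \ref{norm}(ii). For the reverse I would write $A=uv^{\ast}$ as a rank-one matrix, so that $\|A\|=\|u\|_2\|v\|_2$, while the nonzero eigenvalue $v^{\ast}u$ equals $\Tr(A)=n$. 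The Cauchy--Schwarz inequality gives $n=|v^{\ast}u|\le\|u\|_2\|v\|_2=\|A\|$, with equality in (2) forcing $u$ and $v$ to be parallel, say $u=\mu v$. The diagonal constraint $a_{ii}=\mu|v_i|^2=1$ then forces $\mu>0$, so $A=\mu vv^{\ast}$ is positive, and in particular self-adjoint.

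Finally, $(1)\Leftrightarrow(4)$. If $A=A^{\ast}$ then $A$ is positive with unit diagonal, and the known norm formula $\|S_A\|=\max_i a_{ii}=1$ for positive Schur multipliers (as used in the proof of Theorem \ref{mainp}) gives (4). For the converse I would use that $S_A(E_{ij})=a_{ij}E_{ij}$ yields $\|S_A\|\ge|a_{ij}|$ for all $i,j$; combined with $a_{ij}a_{ji}=1$, the bound $\|S_A\|=1$ forces every entry to be unimodular. With $A$ of rank one, unimodular, and with unit diagonal, condition (iv) of Theorem \ref{mainp} holds, so that theorem delivers $\ast$-preservation.

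The main obstacle I anticipate is precisely these two reverse implications $(2)\Rightarrow(1)$ and $(4)\Rightarrow(1)$, where a purely numerical norm equality must be converted into the structural statement that $A$ is self-adjoint. The rank-one Cauchy--Schwarz equality analysis, together with the diagonal normalization, and the entrywise lower bound on $\|S_A\|$ fed through the reciprocal relation $a_{ij}a_{ji}=1$, are what make this conversion work; everything else follows from the already-established Theorems \ref{main} and \ref{mainp} and Corollary \ref{norm}.
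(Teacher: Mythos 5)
Your proof is correct, but it is organized differently from the paper's and uses different key facts in the two nontrivial directions. The paper proves the cycle $(i)\Rightarrow(ii)\Rightarrow(iii)\Rightarrow(i)$ and then treats $(iv)$ separately: for $(ii)\Rightarrow(iii)$ it observes that applying $S_A$ to $J^2=nJ$ makes $\frac{1}{n}A$ an idempotent, and then invokes the classical fact that an idempotent is an orthogonal projection if and only if its operator norm is $1$; for $(i)\Leftrightarrow(iv)$ it uses the representation $S_A(B)=\Lambda B\Lambda^{-1}$ to compute $\|S_A\|=\|\Lambda\|\,\|\Lambda^{-1}\|=\sup_{i,j}|\lambda_i/\lambda_j|$ exactly. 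You instead make ``$A=A^*$'' the hub and prove each of $(2),(3),(4)$ equivalent to it: your $(2)\Rightarrow(1)$ goes through the rank-one factorization $A=uv^*$, the identity $n=\mathrm{Tr}(A)=v^*u$, and the equality case of Cauchy--Schwarz forcing $u$ parallel to $v$ (with the unit diagonal pinning the scalar to be positive); your $(4)\Rightarrow(1)$ uses only the trivial lower bound $\|S_A\|\geq|a_{ij}|$ fed through $a_{ij}a_{ji}=1$ to force unimodularity, then cites Theorem \ref{mainp}$(iv)$. Your route is more elementary and self-contained --- it needs neither the idempotent-norm characterization nor an exact computation of $\|S_A\|$ --- while the paper's route is shorter where it can lean on those tools and yields the explicit formula for $\|S_A\|$ as a byproduct. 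Both arguments are valid; your reliance on the norm formula $\|S_A\|=\max_i a_{ii}$ for positive $A$ in $(1)\Rightarrow(4)$ matches a fact the paper itself uses in proving Theorem \ref{mainp}, so nothing circular is introduced.
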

\begin{proof} $(i)\Rightarrow(ii):$ If $S_A$ is $*$-preserving then $A$ is self-adjoint and by Corollary \eqref{norm}, we have $\|A\|=n.$

$(ii)\Rightarrow(iii):$ Recall that by applying $S_A$ to the equation $J^2=nJ,$ we find that $p:=\frac{1}{n}A$ is a not necessarily orthogonal projection. By \cite[Theorem 5.4]{zhang}, a projection  $p$ is an orthogonal projection if and only if it has operator norm $\|p\|=1.$ Thus, the norm condition $\|A\|=n$ holds if and only if $p,$ and hence $A,$ is positive.

$(iii)\Rightarrow(i):$ If $\frac{1}{n}A$ is a projection then $A$ is self-adjoint and consequently $S_A$ is $*$-preserving.

For the equivalence of $i$ and $iv,$ we compute  $\| S_A\|$ with respect to the norm induced by the operator norm on the range and domain. Since $S_A$ is multiplicative, we note that $S_A (B)=\Lambda B \Lambda^{-1}$ by Corollary 2.2, and we have $\|S_A(B)\|\leq \|\Lambda \| \|B\| \|\Lambda^{-1}\|.$ This bound is in fact achieved if we take $B$ equal to a suitable matrix unit.
Thus, $$\|S_A\|=\|\Lambda \|  \|\Lambda^{-1}\|=\displaystyle\sup_{i,j} |\lambda_i/\lambda_j|,$$ and this is equal to 1 if and only if the $\lambda_i$ all have modulus 1. But by Theorem 2.4 this occurs if and only if the map $S_A$ is $\ast$-preserving.
\end{proof}

\section{Enumeration of matrices of multiplicative Schur maps}
  In this section, $\mathbb{F}$ denotes a field of characteristic zero. Although matrices can very rarely be diagonalized over a general field, we do still have that
  $M_{n}(  \mathbb{F}  ) $ is simple and that  all automorphisms are inner (as a consequence of the Noether-Skolem theorem). Moreover, it is still true that the matrix units $E_{ij}$ satisfy $E_{ij} = E_{ik} E_{kj},$ and from these facts it follows that:

  \begin{prop} Let $\mathbb F$ be a field. Let $S_A\colon  M_{n}(  \mathbb{F}  ) \longrightarrow M_{n}(  \mathbb{F}  )$ be a Schur map.  Then the following are equivalent:
  \begin{enumerate}\item $S_A$ is a nonzero homomorphism,
  \item $S_A (x) = \Lambda x \Lambda^{-1},$ for some invertible diagonal matrix $\Lambda,$ and
  \item $a_{ij}$ = $a_{ik}$ $a_{kj}$  and $a_{ii}=1$ for all $1 \leq i,j,k\leq n$.
  \end{enumerate}
  \end{prop}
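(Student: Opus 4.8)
The plan is to prove the cycle $(i)\Rightarrow(iii)\Rightarrow(ii)\Rightarrow(i)$. Over a general field $\mathbb{F}$ the spectral, trace, and diagonalizability arguments used for Theorem \ref{main} are unavailable, so the strategy is to extract everything from the two purely algebraic facts recalled just above the statement: that $M_{n}(\mathbb{F})$ is simple, and that the matrix units satisfy $E_{ij}=E_{ik}E_{kj}$.

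For $(i)\Rightarrow(iii)$, I would first evaluate the homomorphism on matrix units. Since Schur multiplication gives $S_A(E_{ij})=a_{ij}E_{ij}$, applying $S_A$ to the identity $E_{ij}=E_{ik}E_{kj}$ and using multiplicativity yields $a_{ij}E_{ij}=a_{ik}a_{kj}E_{ij}$, whence $a_{ij}=a_{ik}a_{kj}$ for all $i,j,k$. This already delivers the first relation and requires no field-specific input. To obtain $a_{ii}=1$, I would argue that a nonzero homomorphism out of a simple algebra has trivial kernel: $\Ker S_A$ is a two-sided ideal, so by simplicity it is either $\{0\}$ or all of $M_n(\mathbb{F})$, and since $S_A\neq 0$ it must be $\{0\}$. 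Consequently no $a_{ij}$ can vanish, for otherwise $E_{ij}\in\Ker S_A$. Specialising the relation to $k=j=i$ gives $a_{ii}=a_{ii}^{2}$, and since $a_{ii}\neq 0$ we conclude $a_{ii}=1$.

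For $(iii)\Rightarrow(ii)$, I would mimic the computation in Corollary \ref{cor:diag}. The relations force every entry to be nonzero, since $a_{ij}a_{ji}=a_{ii}=1$, so I can set $f(i):=a_{i1}\in\mathbb{F}^{\times}$; then $a_{1j}=1/a_{j1}$ and $a_{ij}=a_{i1}a_{1j}=f(i)/f(j)$. Taking $\Lambda=\diag(f(1),\dots,f(n))$, a direct entrywise check shows $(\Lambda x\Lambda^{-1})_{ij}=f(i)x_{ij}/f(j)=a_{ij}x_{ij}=(S_A(x))_{ij}$, so $S_A(x)=\Lambda x\Lambda^{-1}$. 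The implication $(ii)\Rightarrow(i)$ is then immediate: conjugation by an invertible diagonal matrix is an algebra homomorphism and is visibly nonzero.

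The main obstacle is conceptual rather than computational: without eigenvalues, the trace-preservation step, or the spectral theorem, one cannot route through intermediate conditions such as rank-one-ness or $\Spec(A)=\{0,n\}$ as in the complex case. The resolution, as above, is to read the multiplicative relation straight off the matrix-unit identity and to replace every analytic input by the simplicity of $M_n(\mathbb{F})$. I note finally that Noether--Skolem guarantees that the resulting automorphism is inner, but the cyclic route produces the sharper \emph{diagonal} form of the conjugator directly, so Noether--Skolem is not strictly needed for this equivalence.
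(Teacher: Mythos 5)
Your proof is correct and follows essentially the route the paper itself indicates (the paper states this proposition without a written-out proof, saying only that it follows from the simplicity of $M_n(\mathbb{F})$, the inner-ness of automorphisms, and the identity $E_{ij}=E_{ik}E_{kj}$); your cycle $(i)\Rightarrow(iii)\Rightarrow(ii)\Rightarrow(i)$ fills in exactly those details, reusing the computation from Corollary \ref{cor:diag} for the diagonal conjugator. Your closing observation that Noether--Skolem is not actually needed, since the explicit diagonal $\Lambda$ is produced directly from the relations $a_{ij}=a_{i1}/a_{j1}$, is a small but genuine sharpening of the paper's sketch.
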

   We define generalized Toeplitz matrices by $G^{n}_{\mathbb{F}}:=\{A\in M_{n}(  \mathbb{F}  )| a_{ij}=\lambda^{j-i}, \lambda\in\mathbb{F}^{*}\}$. We define $L^{n}_{\mathbb{F}}$ to be  the set of all $A\in M_{n}(\mathbb{F})$ such that $S_{A}:  M_{n}(  \mathbb{F}  )  \longrightarrow M_{n}(  \mathbb{F}  )$ is multiplicative.
\begin{prop}

 If we equip $L^{n}_{\mathbb{F}}$ with the Schur product then $L^{n}_{\mathbb{F}}$ turns into an abelian group and $G^{n}_{\mathbb{F}}$ is a subgroup, consisting of Toeplitz matrices.

\end{prop}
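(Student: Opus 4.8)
The plan is to reduce everything to the pointwise description of $L^{n}_{\mathbb{F}}$ supplied by the preceding Proposition, after which the group axioms become elementary entrywise computations in the field $\mathbb{F}$. Recall that (excluding the zero map) a matrix $A$ lies in $L^{n}_{\mathbb{F}}$ precisely when $a_{ii}=1$ and $a_{ij}=a_{ik}a_{kj}$ for all $i,j,k$, equivalently when $a_{ij}=f(i)/f(j)$ for some $f\colon\{1,\dots,n\}\to\mathbb{F}^{*}$; in particular every entry of such an $A$ is nonzero. I would take this characterization as the working definition of membership in $L^{n}_{\mathbb{F}}$ throughout.

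First I would verify that the Schur product is a well-defined binary operation on $L^{n}_{\mathbb{F}}$. Commutativity and associativity are inherited from the commutative multiplication of $\mathbb{F}$, so it suffices to check closure: if $a_{ij}=f(i)/f(j)$ and $b_{ij}=g(i)/g(j)$, then $(A\circ B)_{ij}=a_{ij}b_{ij}=(fg)(i)/(fg)(j)$, so $A\circ B$ is again of the required form (alternatively, the identities $(A\circ B)_{ii}=1$ and $(A\circ B)_{ik}(A\circ B)_{kj}=(A\circ B)_{ij}$ follow at once from the corresponding identities for $A$ and $B$). The identity element is the all-ones matrix $J$, which lies in $L^{n}_{\mathbb{F}}$ (take $f\equiv 1$, so that $S_J$ is the identity map) and satisfies $J\circ A=A$. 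For inverses I would use the Schur inverse $A^{[-1]}$, whose entries are the reciprocals $1/a_{ij}=f(j)/f(i)$; this is defined exactly because no entry of $A$ vanishes, it again satisfies the defining relations (corresponding to the function $1/f$), hence lies in $L^{n}_{\mathbb{F}}$, and clearly $A\circ A^{[-1]}=J$. This establishes that $(L^{n}_{\mathbb{F}},\circ)$ is an abelian group.

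Next I would treat $G^{n}_{\mathbb{F}}$. For $A\in G^{n}_{\mathbb{F}}$ with $a_{ij}=\lambda^{j-i}$ one has $a_{ii}=\lambda^{0}=1$ and $a_{ik}a_{kj}=\lambda^{k-i}\lambda^{j-k}=\lambda^{j-i}=a_{ij}$, so $G^{n}_{\mathbb{F}}\subseteq L^{n}_{\mathbb{F}}$ (concretely, $G^{n}_{\mathbb{F}}$ is the image of the geometric choices $f(i)=\lambda^{-i}$). It is a subgroup because the parameter multiplies under the Schur product: $\lambda^{j-i}\mu^{j-i}=(\lambda\mu)^{j-i}$, the value $\lambda=1$ recovers $J$, and $\lambda^{-1}$ gives the Schur inverse; thus $\lambda\mapsto(\lambda^{j-i})$ is a homomorphism $\mathbb{F}^{*}\to L^{n}_{\mathbb{F}}$ with image $G^{n}_{\mathbb{F}}$. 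Finally, since $a_{ij}=\lambda^{j-i}$ depends only on the difference $j-i$, each such matrix is constant along diagonals, i.e. Toeplitz.

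The argument is essentially a sequence of entrywise checks, so no single step is a serious obstacle; the only points requiring care are conceptual rather than computational. Specifically, one must remember to exclude the zero Schur map so that all entries are nonzero — this is exactly what makes the Schur inverse $A^{[-1]}$ exist and serve as the group inverse — and one must recognize that the multiplicative identity for the Schur product is $J$ rather than the usual $I$, and confirm that $J$ indeed lies in $L^{n}_{\mathbb{F}}$.
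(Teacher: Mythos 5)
Your proof is correct and follows essentially the same route as the paper's: both reduce everything to the entrywise characterization from the preceding proposition ($a_{ii}=1$, $a_{ij}=a_{ik}a_{kj}$, equivalently $a_{ij}=f(i)/f(j)$), identify $J$ as the identity and $A^{[-1]}$ as the inverse, and verify the $G^{n}_{\mathbb{F}}$ claims by the computations $\lambda^{j-i}=\lambda^{k-i}\lambda^{j-k}$ and $\lambda^{j-i}=\lambda^{(j-1)-(i-1)}$. The only difference is one of completeness: the paper declares the group structure ``clear'' and justifies closure under Schur inversion by the abstract fact that the inverse of a homomorphism is a homomorphism, whereas you explicitly check closure under the Schur product and the inverse via the parametrizations $fg$ and $1/f$, which is a slightly more careful rendering of the same argument.
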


\begin{proof}

It is clear that $L^{n}_{\mathbb{F}}$ is an abelian group with respect to Schur product because $J\in L^{n}_{\mathbb{F}}$ and if $A\in L^{n}_{\mathbb{F}}$ then $A^{[-1]}\in L^{n}_{\mathbb{F}}$ because the inverse of a homomorphism is also a homomorphism. Since $a_{ij}=\lambda^{j-i}=\lambda^{(j-1)-(i-1)}=a_{i-1j-1}$, $G^{n}_{\mathbb{F}}$ consist of Toeplitz matrices and the relations $a_{ij}=\lambda^{j-i}=\lambda^{k-i}\lambda^{j-k}=a_{ik}a_{kj}$ imply that it is a subgroup of $L^{n}_{\mathbb{F}}$.

\end{proof}

We remark that for $n=2$, we have $G^{2}_{\mathbb{F}}=L^{2}_{\mathbb{F}}$.

In general, returning to the case of the real or the complex field, the problem of determining all the coefficients of the matrix of a multiplicative Schur map given partial information on the coefficients leads to a Gauss-Jordan problem in abstract linear algebra. Transforming the equations of Theorem \eqref{main} by $x\mapsto \frac{\ln x}{2\pi i},$ we obtain the following linear system
\begin{align*} b_{ii}&=0\\ b_{ij}&= b_{ik}+b_{kj}
\end{align*} over $\mathbb{C}/\mathbb{Z}=\mathbb{C}/\{x\sim y | x-y\in \mathbb{Z} \}\cong S^1\times \mathbb{R}.$

If we want the Schur map to be multiplicative and $*$-preserving, then, by Theorem \eqref{mainp}, the equations for the transformed coefficients become:
\begin{align*} b_{ii}&=0\\ b_{ij}&= b_{ji} = b_{ik}+b_{kj}
\end{align*} over $\mathbb{R}/\mathbb{Z}=\mathbb{R}/\{x\sim y | x-y\in \mathbb{Z} \}\cong S^1.$ In general we are free to specify $n-1$ of the unknowns, and in the case where we specify one row or column of the matrix $[b_{ij}]$, it is straightforward to solve for the remaining unknowns. Let $\mathbb{T}$ be the circle group, then it is easy to check that the map $\Phi$ from $\mathbb{T}^{n-1}$ to  $(L^n_{\mathbb C})^{+}$ which sends the vector $(z_{1},...,z_{n-1})$ to the first row of the matrix $A$ (note that the first entry of the matrix is always $1$) is in fact an isomorphism between topological groups.
From this discussion, denoting by $(L^n_{\mathbb C})^{+}$ the positive matrices within $L^n_{\mathbb C}$ :
\begin{prop} The Lie group $L^n_{\mathbb C}$ has complex dimension $n-1, $ and the compact Lie group  $(L^n_{\mathbb C})^{+}$ is isomorphic to the  $(n-1)-$torus.
\end{prop}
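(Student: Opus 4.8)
The plan is to produce explicit global coordinates that identify $L^n_{\mathbb C}$ with $(\mathbb C^*)^{n-1}$ and $(L^n_{\mathbb C})^+$ with $\mathbb T^{n-1}$, carrying the Schur product to coordinatewise multiplication; the two Lie-group assertions then drop out at once. First I would recall from Theorem \ref{main} and Corollary \ref{cor:diag} that $A\in L^n_{\mathbb C}$ exactly when $a_{ij}=a_{i1}a_{1j}$ with $a_{ii}=1$ and all entries nonzero. Since $a_{i1}a_{1i}=a_{ii}=1$, the entire matrix is recovered from its first row $(1,a_{12},\dots,a_{1n})$ by $a_{ij}=a_{1j}/a_{1i}$. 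Thus the first-row map $\psi\colon A\mapsto(a_{12},\dots,a_{1n})$ is a well-defined bijection from $L^n_{\mathbb C}$ onto $(\mathbb C^*)^{n-1}$, its inverse being the reconstruction formula.

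Next I would check the algebraic and analytic compatibility. Because $(A\circ B)_{1j}=a_{1j}b_{1j}$, the map $\psi$ sends the Schur product to coordinatewise multiplication on $(\mathbb C^*)^{n-1}$, so $\psi$ is a group isomorphism; moreover both $\psi$ and its inverse are rational with nonvanishing denominators (the relevant coordinates are never zero), hence biholomorphic. This realizes $L^n_{\mathbb C}$ as the complex Lie group $(\mathbb C^*)^{n-1}$, which has complex dimension $n-1$, as claimed.

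Finally, restricting to the positive matrices: by Theorem \ref{mainp} the elements of $(L^n_{\mathbb C})^+$ are precisely those $A\in L^n_{\mathbb C}$ with unimodular entries, equivalently those whose first row lies in $\mathbb T^{n-1}$. The restriction of $\psi^{-1}$ is then exactly the announced map $\Phi\colon\mathbb T^{n-1}\to(L^n_{\mathbb C})^+$; being the restriction of a biholomorphic group isomorphism it is itself an isomorphism of (compact) Lie groups, and since $\mathbb T^{n-1}$ is compact this exhibits $(L^n_{\mathbb C})^+$ as the $(n-1)$-torus.

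I expect the only point needing genuine care is confirming that $(L^n_{\mathbb C})^+$ really is a subgroup, so that the torus statement concerns groups and not merely underlying spaces. This follows from Theorem \ref{mainp} together with the composition rule $S_{A\circ B}=S_A S_B$: if $A,B\in(L^n_{\mathbb C})^+$ then $S_A S_B$ is again a unital, $*$-preserving, multiplicative isomorphism, whence $A\circ B\in(L^n_{\mathbb C})^+$, while closure under the Schur inverse $A^{[-1]}$ is condition $(vi)$ of that theorem. Everything else is routine bookkeeping on the explicit coordinates.
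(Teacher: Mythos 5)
Your proposal is correct and follows essentially the same route as the paper: both parametrize $L^n_{\mathbb C}$ by the first row of $A$ (the paper's map $\Phi$ from $\mathbb{T}^{n-1}$ onto $(L^n_{\mathbb C})^{+}$ is exactly your $\psi^{-1}$ restricted to the torus), with the paper merely interposing a logarithmic change of variables $x\mapsto \frac{\ln x}{2\pi i}$ to turn the multiplicative constraints into linear ones over $\mathbb{C}/\mathbb{Z}$. Your direct multiplicative coordinates, together with the explicit check that $(L^n_{\mathbb C})^{+}$ is a subgroup, are if anything slightly more complete than the paper's sketch.
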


If we are interested in Schur maps that are multiplicative, $*$-preserving, and have only real entries in their matrix of coefficients, then all the entries must, by  Theorem \eqref{mainp}, be in $\{-1,+1\}$. Then, for example, for $A\in(L^{3}_{\mathbb{R}})^{+},$ where $(L^{k}_{\mathbb{R}})^{+}$ is the subgroup of positive matrices within  $L^{k}_{\mathbb{R}},$
the only possibilities are:
\begin{center}\begin{equation*}
\left( \begin{array}{ccc}
1 &  1   &1\\
1 &  1   &1\\
1 &  1   &1\end{array} \right)
,\left( \begin{array}{ccc}
  1 &-1  &-1\\
-1 &  1   &1\\
-1 &  1   &1\end{array} \right)
,\left( \begin{array}{ccc}
  1 &-1 &  1\\
-1 &  1 & -1\\
  1 &-1 &  1\end{array} \right)
 {\rm and}\left( \begin{array}{ccc}
  1 &  1 &-1\\
  1 &  1 &-1\\
-1 & -1 &  1\end{array} \right).
\end{equation*}\end{center}\vspace{.2cm}

These are all of the 3-by-3 positive and real Schur multiplicative matrices. For the 2-by-2 complex case the only possibility is:
\begin{equation*}
\left( \begin{array}{cc}
1 &  z   \\
1/z &  1  \end{array} \right)
\end{equation*}\vspace{.2cm}
The above matrix has operator norm $\sqrt { {|z|^{-2}+2+|z|^{2}}},$ which is consistent with Proposition \eqref{idempotent}.
For the 3-by-3 complex case, we have a nice expression for the general form of a matrix for a multiplicative Schur map, as a Schur factorization:
\begin{equation*}
\left( \begin{array}{ccc}
1 &  z & z   \\
1/z &  1 & 1 \\
1/z & 1 & 1 \end{array} \right)\circ
\left( \begin{array}{ccc}
1 &  1 & w   \\
1 &  1 & w \\
1/w & 1/w & 1 \end{array} \right)
\end{equation*}\vspace{.2cm} where $\circ$ denotes the Schur product.

\begin{thm}
The discrete group
 $(L^{n}_{\mathbb{R}})^{+}$ has cardinality $2^{n-1}$.

\end{thm}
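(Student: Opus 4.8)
The plan is to exhibit an explicit finite parametrization of $(L^{n}_{\mathbb{R}})^{+}$ and then simply count. First I would invoke Theorem \eqref{mainp}: an element $A$ of $(L^{n}_{\mathbb{R}})^{+}$ is a real, positive, rank-one matrix with unimodular entries and unit diagonal, so every entry $a_{ij}$ lies in $\{-1,+1\}$ (a real unimodular number being $\pm 1$). By Corollary \ref{cor:diag}, or equally by condition $(v)$ of Theorem \eqref{main}, such an $A$ is determined by $a_{ij}=f(i)/f(j)$ with $f(i):=a_{i1}$. Since the values $f(i)$ now lie in $\{\pm 1\}$ and are therefore equal to their own reciprocals, this reads $a_{ij}=f(i)f(j)$; that is, $A=vv^{T}$ with $v=(f(1),\dots,f(n))^{T}\in\{\pm 1\}^{n}$. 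Conversely any such $v$ yields $A=vv^{T}$, which is real, symmetric, positive semidefinite, of rank one, with diagonal $f(i)^{2}=1$ and trace $n$, hence with $\Spec(A)=\{0,n\}$; thus $A\in(L^{n}_{\mathbb{R}})^{+}$ by Theorem \eqref{main}.

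Next I would make the assignment $v\mapsto vv^{T}$ precise as a two-to-one surjection from $\{\pm 1\}^{n}$ onto $(L^{n}_{\mathbb{R}})^{+}$. Surjectivity is exactly the previous paragraph. For the fibres, suppose $f(i)f(j)=g(i)g(j)$ for all $i,j$; specializing $j=1$ gives $f(i)=c\,g(i)$ with $c:=f(1)g(1)\in\{\pm 1\}$ independent of $i$, so $f=g$ or $f=-g$. Since also $vv^{T}=(-v)(-v)^{T}$, each matrix has precisely two preimages. Hence $\lvert(L^{n}_{\mathbb{R}})^{+}\rvert=\lvert\{\pm 1\}^{n}\rvert/2=2^{n}/2=2^{n-1}$.

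Alternatively, and more in the spirit of the preceding discussion, the count can be read off the transformed linear system. Under $x\mapsto \frac{\ln x}{2\pi i}$ the admissible entries $+1$ and $-1$ map to $0$ and $\tfrac12$, so the transformed coefficients $b_{ij}$ take values in $\tfrac12\mathbb{Z}/\mathbb{Z}\cong\mathbb{Z}/2\mathbb{Z}$ and satisfy $b_{ii}=0$ together with $b_{ij}=b_{ji}=b_{ik}+b_{kj}$. Exactly as in the torus case treated earlier, the first row $(b_{12},\dots,b_{1n})$ may be prescribed freely and then determines every remaining entry, realizing an isomorphism $(L^{n}_{\mathbb{R}})^{+}\cong(\mathbb{Z}/2\mathbb{Z})^{n-1}$, again of cardinality $2^{n-1}$. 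This also agrees with the displayed $n=3$ list, where $2^{3-1}=4$, and with $n=2$, where $2^{2-1}=2$.

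The statement is a direct harvest of Theorems \eqref{main} and \eqref{mainp}, so no serious obstacle arises; the one point requiring genuine care—the mild technical core of either route—is confirming that the $n-1$ free parameters index \emph{distinct} matrices, equivalently that the parametrization $v\mapsto vv^{T}$ is exactly two-to-one. Both formulations reduce to the elementary fibre computation $f=\pm g$ carried out above, which is what rules out overcounting and pins the cardinality down to $2^{n-1}$ rather than $2^{n}$.
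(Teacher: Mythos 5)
Your proof is correct and follows essentially the same route as the paper: both deduce from Theorem \eqref{mainp} that the entries lie in $\{-1,+1\}$, observe that the rank-one structure makes $A$ determined by its first row (equivalently your $v\mapsto vv^{T}$ parametrization), and count $2^{n-1}$. You additionally spell out the surjectivity of the parametrization and the two-to-one fibre computation, which the paper only asserts with ``each choice gives us a distinct Schur map,'' so your write-up is a slightly more complete version of the same argument.
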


\begin{proof}

If $A\in (L^{n}_{\mathbb{R}})^{+}$ then by Theorem \eqref{mainp} we have either $a_{ij}=1$ or $-1$ for all $1 \leq i,j\leq n$. On the other hand, since $A$ is of rank one and $a_{11}$=$1$, $A$ can be completely determined by exactly $n-1$ of its first row's entries. In other words, we have $n-1$ entries and two choices for each entry. Each choice gives us a distinct Schur map, and therefore, $Card((L^{n}_{\mathbb{R}})^{+})=2^{n-1}$.

\end{proof}
 \begin{rmk} If we consider only Schur matrices with positive real entries, then the only such Schur map that is multiplicative and $\ast$-preserving is the identity map $S_{J}=\rm Id$.\end{rmk}

\section{Characterization in infinite dimension}
In this section, $H$ denotes the separable infinite--dimensional Hilbert space and $K(H)$ denotes the algebra of compact operators over $H$. If we fix, once and for all, an orthonormal basis for our Hilbert space, we can then identify bounded operators on $H,$ denoted by $B(H),$ with their (infinite) matrix representation with respect to that basis. We may then consider Schur multipliers with respect to various classes of linear operators in $B(H).$ The Schatten--von Neumann classes, ${\mathcal L}_p$, are algebraic ideals within $B(H)$ and are defined by $\{x\in B(H) |\, \tau(|x|^p)<\infty\},$ where $\tau$ is the canonical trace on $B(H).$ The Schatten $p$-norm, which makes ${\mathcal L}_p$ a Banach space within $K(H)$, is defined by $\|x\|:= \tau(|x|^p)^{1/p}.$
In particular, the Schur multipliers  of ${\mathcal L}_2$ are precisely the matrices whose coefficients are uniformly bounded.

Since the Schatten classes are ideals, we may take products of elements of a Schatten class ${\mathcal L}_p$ and obtain elements of ${\mathcal L}_p$. (For ${\mathcal L}_{2},$ more is true: a product of elements in ${\mathcal L}_{2}$ is also in ${\mathcal L}_1.$)
\begin{prop} A Schur multiplier $S_A\colon {\mathcal L}_{2} \longrightarrow {\mathcal L}_{2}$ with no kernel satisfies $S_A(xy)=S_A(x)S_A(y)$ if and only if $a_{ij}=f(i)/f(j),$ where $f\colon {\mathbb N}\longrightarrow {\mathbb C}$ is a bounded sequence that is bounded away from zero.
\end{prop}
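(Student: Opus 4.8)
The plan is to prove the two directions separately, modeling the argument on the finite-dimensional case (Theorem~\ref{main} and Corollary~\ref{cor:diag}) but being careful about boundedness, since in infinite dimensions we need $S_A$ to actually map ${\mathcal L}_2$ into ${\mathcal L}_2$ and we need the factorization $f(i)/f(j)$ to produce a bounded multiplier. The excerpt reminds us that Schur multipliers of ${\mathcal L}_2$ are precisely the matrices with uniformly bounded coefficients, so the constraint ``$f$ bounded and bounded away from zero'' is exactly what makes $a_{ij}=f(i)/f(j)$ a legitimate ${\mathcal L}_2$-multiplier with bounded Schur-inverse.

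For the easy direction (assume $a_{ij}=f(i)/f(j)$ with $f$ bounded away from $0$ and bounded above), I would argue exactly as in Corollary~\ref{cor:diag}: let $\Lambda=\diag(f(1),f(2),\dots)$, which is a bounded invertible operator on $H$ precisely because $f$ is bounded and bounded away from zero, so both $\Lambda$ and $\Lambda^{-1}$ are in $B(H)$. Then for $x\in{\mathcal L}_2$ one checks entrywise that $S_A(x)=\Lambda x\Lambda^{-1}$. Since ${\mathcal L}_2$ is an ideal in $B(H)$, conjugation by the bounded invertible $\Lambda$ preserves ${\mathcal L}_2$, and multiplicativity $S_A(xy)=\Lambda xy\Lambda^{-1}=(\Lambda x\Lambda^{-1})(\Lambda y\Lambda^{-1})=S_A(x)S_A(y)$ is immediate. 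The only thing to verify is that the entrywise formula really equals the conjugation, which is the same one-line computation as in the finite-dimensional corollary.

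For the forward direction (assume $S_A$ is a kernel-free multiplicative ${\mathcal L}_2$-multiplier), the first step is to recover the algebraic relation $a_{ij}=a_{ik}a_{kj}$ and $a_{ii}=1$. Because the matrix units $E_{ij}$ lie in ${\mathcal L}_2$ and satisfy $E_{ij}=E_{ik}E_{kj}$, I can test multiplicativity on them: $S_A(E_{ij})=a_{ij}E_{ij}$, and $S_A(E_{ik})S_A(E_{kj})=a_{ik}a_{kj}E_{ij}$, so multiplicativity forces $a_{ij}=a_{ik}a_{kj}$; taking $i=j=k$ (and using that no kernel means no $a_{ii}$ can vanish) gives $a_{ii}=a_{ii}^2$, hence $a_{ii}=1$. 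The no-kernel hypothesis also forces every $a_{ij}\neq 0$, since $a_{ij}=0$ would put $E_{ij}$ in the kernel. Setting $f(i):=a_{i1}$ then yields $a_{ij}=a_{i1}a_{1j}=a_{i1}/a_{j1}=f(i)/f(j)$ exactly as in Corollary~\ref{cor:diag}, after noting $a_{1j}a_{j1}=a_{11}=1$ so $a_{1j}=1/a_{j1}=1/f(j)$.

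The main obstacle, and the genuinely infinite-dimensional part of the argument, is showing that $f$ is bounded and bounded away from zero; the purely algebraic steps above give the ratio form for free, but they say nothing about the size of $f$. Here I would invoke the characterization recalled in the excerpt: since $S_A$ is a bounded multiplier of ${\mathcal L}_2$, its coefficients $a_{ij}=f(i)/f(j)$ are uniformly bounded, say $|f(i)/f(j)|\le M$ for all $i,j$. Fixing $j=1$ gives $|f(i)|\le M|f(1)|$, so $f$ is bounded above; fixing $i=1$ gives $|f(1)|/|f(j)|\le M$, i.e. $|f(j)|\ge |f(1)|/M$, so $f$ is bounded away from zero. (One should note $f(1)=a_{11}=1\neq0$ to get a genuine positive lower bound.) Thus the uniform boundedness of the coefficient matrix, which is where the ${\mathcal L}_2$ hypothesis does its real work, converts the bare ratio structure into the quantitative boundedness claimed in the statement, completing the proof.
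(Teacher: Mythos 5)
Your proof is correct, and the forward direction (multiplicative implies $a_{ij}=f(i)/f(j)$ with $f$ and $1/f$ bounded) is essentially the paper's argument: the paper restricts $S_A$ to finite matrix corners and invokes the finite-dimensional theorem to get unitality and the ratio form, whereas you re-derive the relations $a_{ij}=a_{ik}a_{kj}$ and $a_{ii}=1$ directly from the matrix units $E_{ij}=E_{ik}E_{kj}$; both then use the fact that an ${\mathcal L}_2$-multiplier has uniformly bounded coefficients to bound $f$ and $1/f$. Where you genuinely diverge is the converse direction. The paper verifies multiplicativity on finite-rank operators (again via matrix corners) and then extends to all of ${\mathcal L}_2$ by a density-and-continuity argument; you instead observe that the hypotheses on $f$ make $\Lambda=\diag(f(1),f(2),\dots)$ a bounded invertible operator with bounded inverse and that $S_A(x)=\Lambda x\Lambda^{-1}$ holds exactly, so multiplicativity is an algebraic identity with no limit to take. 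Your route is arguably cleaner: it sidesteps the need to justify that multiplication ${\mathcal L}_2\times{\mathcal L}_2\to{\mathcal L}_1\subset{\mathcal L}_2$ behaves well enough under the approximation to pass to the limit in $S_A(xy)=S_A(x)S_A(y)$, a point the paper's density step quietly relies on. The paper's route, on the other hand, reuses the finite-dimensional machinery verbatim and would generalize to situations where no single conjugating operator is available. Both arguments are valid and reach the same conclusion.
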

\begin{proof} If $a_{ij}=f(i)/f(j),$ then given two operators $x,y\in {\mathcal L}_{2}$ that are of finite rank with respect to the chosen basis, we choose a matrix subalgebra $M_{n}(  \mathbb{C}  )$  within $K(H)$ that is a corner of our space of infinite matrices, and contains both $x$ and $y.$ We then have that $S_A(xy)=S_A(x)S_A(y)$ for $x$ and $y$ of finite rank. But since $S_A:{\mathcal L}_{2}\longrightarrow {\mathcal L}_{2}$ is a bounded operator, we then have by approximation that the equation $S_A(xy)=S_A(x)S_A(y)$ holds for all $x$ and $y$ in ${\mathcal L}_{2}$.

For the converse, if $S_A:{\mathcal L}_{2}\longrightarrow {\mathcal L}_{2}$ is multiplicative, then for every finite-dimensional matrix corner $M_{n}(  \mathbb{C}  )$  the map $S_A$ restricts to $S_A:M_{n}(  \mathbb{C}  ) \longrightarrow M_{n}(  \mathbb{C}  ) .$ The restricted maps are homomorphisms with no kernel, and thus by finite-dimensionality of $M_{n}(  \mathbb{C}  )$  they are isomorphisms. In particular, then, since the unit of an algebra is unique, the restricted maps $S_A$ must take the unit of $M_{n}(  \mathbb{C}  )$  to the unit of $M_{n}(  \mathbb{C}  ).$ Our results for the finite-dimensional case then imply that $a_{ij}=f(i)/f(j)$ with $f(i):=a_{i1}.$ Taking larger and larger corners, we conclude that for all $i$ and $j$ we have  $a_{ij}=f(i)/f(j)$ with $f(i):=a_{i1}.$ If either $f$ were not a bounded sequence, or $1/f$ were not a bounded sequence, then we could find an unbounded sequence of coefficients of $A,$ and $S_A$ would not be a Schur multiplier of ${\mathcal L}_{2}.$\end{proof}
We next consider the case of Schur multipliers of the compact operators.
\begin{lem}
Let $A$ be an infinite matrix in $\ell^{\infty} (\mathbb{N}^{2}) $ and $K(H)$ be the algebra of compact operators on $\ell^{2}(\mathbb{N})$ represented by formal infinite matrices. Then $S_{A}$ is a bounded operator from $K(H)$ to $K(H)$.
\end{lem}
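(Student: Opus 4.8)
The plan is to establish boundedness of $S_A$ by reducing to finite matrix corners, where everything is elementary, and then transferring the resulting estimate to all of $K(H)$ by density of the finite-rank operators. First I would fix the standard matrix units $E_{ij}$ together with the finite-rank projections $P_n$ onto the span of the first $n$ basis vectors of $\ell^2(\mathbb{N})$, so that $x\mapsto P_n x P_n$ compresses a formal matrix $x$ to its $n\times n$ corner $M_n(\mathbb{C})$. The key structural observation is that $S_A$ acts entrywise and hence commutes with these compressions: $S_A(P_n x P_n)=P_n\,S_A(x)\,P_n$, and on the corner $M_n(\mathbb{C})$ the restriction of $S_A$ is precisely the finite Schur map $S_{A_n}$ determined by the truncation $A_n=P_n A P_n$, which is bounded for each fixed $n$.

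Next I would use that every $x\in K(H)$ is the operator-norm limit of its corners $P_n x P_n$, which are finitely supported and in particular finite-rank. Thus, if the corner estimates can be made uniform, then $S_A(P_n x P_n)=S_{A_n}(P_n x P_n)$ is a Cauchy sequence of finite-rank operators whose limit defines $S_A(x)$; being a norm limit of finite-rank operators, this limit lies in $K(H)$, so $S_A$ is at least a well-defined linear map from $K(H)$ into $K(H)$. Boundedness of $S_A$ on $K(H)$ then comes down to a single quantitative input, namely a bound on $S_{A_n}$ that does not deteriorate as $n$ grows.

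The hard part will be exactly this uniform estimate: to produce a constant $C$, depending only on $\|A\|_\infty=\sup_{i,j}|a_{ij}|$ and not on $n$, with $\|S_{A_n}\|_{B(M_n(\mathbb{C}))}\le C$ for all $n$. This is the crux on which the whole lemma rests, and where I expect the genuine difficulty to lie, since the finite Schur norms are a priori sensitive to more than the supremum of the entries alone. The natural route is to seek a factorization of the coefficients $a_{ij}=\langle \eta_j,\xi_i\rangle$ through a common auxiliary Hilbert space with $\sup_i\|\xi_i\|<\infty$ and $\sup_j\|\eta_j\|<\infty$; by the Grothendieck--Haagerup characterization of Schur multipliers such a factorization controls $\|S_A\|_{\mathrm{cb}}$, and hence every corner norm simultaneously, by $\sup_i\|\xi_i\|\,\sup_j\|\eta_j\|$. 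Extracting such a uniformly bounded factorization from the sole hypothesis that $A\in\ell^\infty(\mathbb{N}^2)$ is the decisive and most delicate step, and it is there that the argument must do its real work.
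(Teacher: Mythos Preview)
Your plan is more careful than the paper's own argument, and you have correctly isolated the crux: a uniform bound $\|S_{A_n}\|\le C$ independent of $n$. The paper, by contrast, simply checks that $S_A$ acts boundedly on the individual matrix units $E_{ij}$ (indeed $S_A(E_{ij})=a_{ij}E_{ij}$, so $\|S_A(E_{ij})\|\le\|A\|_\infty\|E_{ij}\|$) and declares this sufficient. That inference is not justified: boundedness on a set whose linear span is dense does not by itself yield boundedness of the operator, and you have put your finger precisely on the step the paper glosses over.

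The difficulty is that this gap cannot be closed, because the lemma as stated is false. The hypothesis $A\in\ell^\infty(\mathbb{N}^2)$ alone does not force $S_A$ to be a bounded Schur multiplier on $K(H)$. A standard counterexample is the triangular truncation matrix, $a_{ij}=1$ for $i\le j$ and $a_{ij}=0$ for $i>j$: it lies in $\ell^\infty(\mathbb{N}^2)$ with $\|A\|_\infty=1$, yet its restriction to $M_n(\mathbb{C})$ has Schur multiplier norm of order $\log n$, so no uniform constant $C$ exists and $S_A$ is unbounded on $K(H)$. The Grothendieck--Haagerup factorization you invoke is exactly the right criterion, but it \emph{characterizes} the genuine Schur multipliers of $K(H)$ rather than following from mere membership in $\ell^\infty$. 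So your instinct that ``the argument must do its real work'' at that step is correct --- and that work cannot be completed under the stated hypotheses.
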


\begin{proof}
It is sufficient to consider the case of elementary compact operators $T=e_{i}<e_{j}, .>$. Since $A\circ T=a_{ij}T$, we have $\|A\circ T\|\leq\| A\|_{\infty} \|T\|$ where $\| A\|_{\infty}=\sup_{i,j}\left\lvert a_{ij} \right\rvert$. Hence, $S_{A}$ is continuous and since it maps finite rank operators to finite rank operators, it is a map from $K(H)$ to $K(H)$.
\end{proof}

 \begin{thm}
Let $S_{A}:K(H)\longrightarrow K(H)$ be a Schur map where A is in $\ell^{\infty} (\mathbb{N}^{2}) $. Then $S_{A}$ is multiplicative if and only if the columns of $A$ are scalar multiples of the first column, and moreover there are 1's on the diagonal.

 \end{thm}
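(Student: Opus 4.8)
The plan is to transfer the finite-dimensional characterization of Theorem \ref{main} to the finite corners of $K(H)$, using the boundedness established in the preceding lemma to pass to the limit. Both directions rest on the elementary matrix-unit identity $E_{ij}=E_{ik}E_{kj}$, which holds for the infinite matrix units as noted in the paper, together with the fact that each $E_{ij}$ is a finite-rank, hence compact, operator lying in the domain of $S_A$, and that $S_A(E_{pq})=a_{pq}E_{pq}$.

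For the forward implication, suppose $S_A$ is multiplicative. Applying $S_A$ to $E_{ij}=E_{ik}E_{kj}$ yields $a_{ij}=a_{ik}a_{kj}$ for all $i,j,k$. Taking $k=1$ gives $a_{ij}=a_{1j}\,a_{i1}$, so the $j$-th column of $A$ is the scalar $a_{1j}$ times the first column, which is exactly the asserted column condition. For the diagonal, the case $i=j=k$ gives $a_{ii}=a_{ii}^2$, so $a_{ii}\in\{0,1\}$; and if $a_{mm}=0$ for some $m$, then putting $k=m$ forces $a_{im}=a_{im}a_{mm}=0$ for every $i$, whence $a_{ij}=a_{im}a_{mj}=0$ for all $i,j$, i.e. $A=0$. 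Since the zero map is excluded, every $a_{mm}=1$. (Equivalently, one may note that $S_A$ is bounded by the preceding lemma, so its kernel is a closed two-sided ideal of the simple algebra $K(H)$ and hence trivial, forcing every $a_{ij}\neq 0$ and thus $a_{ii}=1$.)

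For the converse, assume the columns of $A$ are scalar multiples of the first and that $a_{ii}=1$. Writing the $j$-th column as $c_j$ times the first column and using $a_{jj}=1=c_j a_{j1}$ gives $c_j=1/a_{j1}$, whence $a_{ij}=a_{i1}/a_{j1}$ and therefore $a_{ij}=a_{ik}a_{kj}$ for all $i,j,k$; this is condition $(v)$ of Theorem \ref{main}. For operators $x,y$ of finite rank I would choose a finite corner $M_n(\mathbb{C})\subset K(H)$ containing both, whose top-left block $A_n$ inherits condition $(v)$; the implication $(v)\Rightarrow(i)$ of Theorem \ref{main} then gives $S_A(xy)=S_A(x)S_A(y)$ for such $x,y$. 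Since the finite-rank operators are dense in $K(H)$, $S_A$ is bounded by the preceding lemma, and multiplication is jointly continuous on bounded sets, the identity extends by approximation to all $x,y\in K(H)$.

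The main obstacle is this last passage to the infinite-dimensional limit. The formal Schur computation $\sum_k a_{ik}a_{kj}x_{ik}y_{kj}=a_{ij}\sum_k x_{ik}y_{kj}$ is only meaningful once one knows the relevant series converge and that $S_A(x)$, $S_A(y)$, and their product again lie in $K(H)$; this is precisely what the boundedness lemma supplies from the hypothesis $A\in\ell^\infty$, and it is what justifies replacing the entrywise identity by a genuine operator identity. By comparison the forward direction is soft, its only delicate point being the observation that a single vanishing diagonal entry would collapse the entire matrix to zero.
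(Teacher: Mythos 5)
Your proof is correct, and the forward direction is essentially identical to the paper's: both derive $a_{ij}=a_{ik}a_{kj}$ from $S_A(E_{ik}E_{kj})=S_A(E_{ik})S_A(E_{kj})$, both rule out vanishing entries (the paper via simplicity of $K(H)$ and the nonzero-kernel contradiction, which you also offer as your parenthetical alternative; your entrywise collapse argument showing a single zero diagonal entry forces $A=0$ is an equally valid, more elementary variant). The converse is where you diverge: the paper verifies multiplicativity by a direct entrywise computation, writing $(A\circ BC)_{ij}=\sum_{k} a_{ij}b_{ik}c_{kj}=\sum_k a_{ik}b_{ik}c_{kj}a_{kj}=((A\circ B)(A\circ C))_{ij}$ for arbitrary $B,C\in K(H)$, leaving the convergence of these infinite sums and the passage from an entrywise identity to an operator identity implicit. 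You instead reduce to finite corners, invoke the implication $(v)\Rightarrow(i)$ of Theorem \ref{main} there, and extend by density of the finitely supported operators together with the boundedness of $S_A$ from the preceding lemma and joint continuity of multiplication. Your route is the same one the paper itself uses for the $\mathcal{L}_2$ proposition in this section, and it has the advantage of making the limiting step explicit; the paper's direct computation is shorter but tacitly relies on the absolute convergence of $\sum_k b_{ik}c_{kj}$ (which does hold, since rows and columns of bounded operators are square-summable). Both arguments are sound.
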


\begin{proof}
If $S_{A}$ is multiplicative then  $a_{ij}\neq 0$ for all $1 \leq i,j\leq n$ because if there exist a $m$ and $n$ in $\mathbb{N}$ such that $a_{mn}$=$0$ then $S_{A}(E_{mn})$=$0$ which implies $E_{mn}\in \Ker (S_{A})\lhd K(H)$ but this contradicts the fact that $K(H)$ is simple. Since $S_{A}$ is multiplicative we have $S_{A}(E_{ij})$=$S_{A}(E_{ik}E_{kj})$= $S_{A}(E_{ik})S_{A}(E_{kj})$ which implies $a_{ij}$=$a_{ik}a_{kj}$. If we let $k=i=j$ we have $a_{ii}=a_{ii}^{2}$ and therefore $a_{ii}=1$. For $i=1$, the above equations imply that
\begin{equation*}
\left(
\begin{array}{c}
 a_{1j}\\
 a_{2j}\\
 a_{3j}\\
 a_{4j} \\
  .\\
  .\\
  .\\
\end{array}\right)
=a_{kj}\left( \begin{array}{ccc}
 a_{1k}\\
 a_{2k}\\
 a_{3k}\\
 a_{4k} \\
  .\\
  .\\
  .\\
\end{array}\right)
\end{equation*}

If we let $k=1$ then $A$=[ $C_{1}$,  $a_{12}  C_{1}$,  $a_{13} C_{1}$,  $a_{14} C_{1}$, ...]. In other words, by abuse of terminology, we have $\Rank(A)=1$. Conversely if $\Rank(A)=1$ in this sense then we have
\begin{equation*}
\left(
\begin{array}{c}
 a_{1j}\\
 a_{2j}\\
 a_{3j}\\
 a_{4j} \\
  .\\
  .\\
  .\\
\end{array}\right)
=\lambda_{kj}\left( \begin{array}{ccc}
 a_{1k}\\
 a_{2k}\\
 a_{3k}\\
 a_{4k} \\
  .\\
  .\\
  .\\
\end{array}\right)
\end{equation*}
Since A is unit diagonal, if we let $i=k$ then we have  $a_{kj}= \lambda_{kj} a_{kk}= \lambda_{kj}$ and consequently  we recover the equations  $a_{ij}=a_{ik} a_{kj}$. Hence, $(A\circ BC)_{ij}=\displaystyle\sum\limits_{k=1}^{\infty} a_{ij} b_{ik} c_{kj}=\displaystyle\sum\limits_{k=1}^{\infty}a_{ik} b_{ik} c_{kj}a_{kj}=( (A\circ B)(A\circ C))_{ij}$. Therefore, $S_{A}(BC)=S_{A}(B)S_{A}(C)$.
\end{proof}

\begin{cor}

Let $S_{A}:K(H)\longrightarrow K(H)$ be $\ast$-preserving where A is in $\ell^{\infty} (\mathbb{N}^{2}) $. If $S_{A}$ is multiplicative then $\left\rvert a_{ij} \right\rvert$=$1$ and $a_{ii}$=$1$.

\end{cor}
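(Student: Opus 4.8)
The plan is to combine the multiplicative structure already established in the preceding Theorem with the additional information furnished by $\ast$-preservation; nothing deep is needed, since this is the infinite-dimensional analogue of the implication $(iii)\Rightarrow(iv)$ of Theorem \eqref{mainp}. First I would record what multiplicativity buys. By the preceding Theorem, the multiplicativity of $S_A$ gives $a_{ij}=a_{ik}a_{kj}$ and $a_{ii}=1$ for all indices $i,j,k\in\mathbb{N}$. Specializing $k=j$ yields the reciprocal relation $a_{ij}a_{ji}=a_{ii}=1$; in particular every coefficient is nonzero and $a_{ji}=1/a_{ij}$.

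The second step is to extract self-adjointness of $A$ from the hypothesis that $S_A$ is $\ast$-preserving. I would test the defining identity $S_A(T^{\ast})=S_A(T)^{\ast}$ on an elementary matrix unit $E_{ij}$, which is compact and hence a legitimate argument. Since $E_{ij}^{\ast}=E_{ji}$, on one side $S_A(E_{ij}^{\ast})=S_A(E_{ji})=a_{ji}E_{ji}$, while on the other $S_A(E_{ij})^{\ast}=(a_{ij}E_{ij})^{\ast}=\overline{a_{ij}}\,E_{ji}$. Comparing coefficients gives $a_{ji}=\overline{a_{ij}}$ for all $i,j$, i.e. $A=A^{\ast}$.

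Finally I would combine the two relations. From $a_{ji}=1/a_{ij}$ and $a_{ji}=\overline{a_{ij}}$ we obtain $1=a_{ij}a_{ji}=a_{ij}\overline{a_{ij}}=\lvert a_{ij}\rvert^{2}$, so that $\lvert a_{ij}\rvert=1$, while $a_{ii}=1$ was already noted in the first step. This is exactly the asserted conclusion.

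Since the argument is elementary, there is no substantial obstacle; the only point requiring care is that the identities $a_{ij}=a_{ik}a_{kj}$ and $a_{ii}=1$ transfer verbatim to the infinite-dimensional setting. This is already guaranteed by the preceding Theorem, whose conclusion holds for all indices in $\mathbb{N}$, and the hypothesis $A\in\ell^{\infty}(\mathbb{N}^{2})$ ensures every expression encountered is well defined. Thus the matrix-unit computation above is valid, and the proof reduces to the three short steps indicated.
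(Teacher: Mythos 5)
Your proposal is correct and follows essentially the same route as the paper, which argues in one line that $1=a_{ii}=a_{ij}a_{ji}=a_{ij}\overline{a_{ij}}=\lvert a_{ij}\rvert^{2}$. The only difference is that you explicitly justify $a_{ji}=\overline{a_{ij}}$ by testing $\ast$-preservation on matrix units, a step the paper leaves implicit.
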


\begin{proof}
By the above theorem, $A$ satisfies the equations $1$=$a_{ii}$=$a_{ij}a_{ji}=a_{ij}\overline{a_{ij}}=\left\rvert a_{ij} \right\rvert^{2}$. Therefore, $A$ has unimodular entries.
\end{proof}
\begin{prop}
Let $A$ be an infinite matrix in $ \ell^{\infty} (\mathbb{N}^{2})$. If $A$ has  1's on the diagonal and linearly dependent columns then A is not bounded in the operator norm.
\end{prop}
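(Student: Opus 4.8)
The plan is to reduce the boundedness question to a single column of $A$ and to show that this column, regarded as a vector in $\ell^2(\mathbb{N})$, fails to be square-summable; this already prevents $A$ from representing a bounded operator. First I would pin down the meaning of ``linearly dependent columns'' in this setting: exactly as in the preceding theorem, the columns being pairwise linearly dependent, together with the fact that the diagonal entries equal $1$ (so that no column can vanish, since the $j$-th column contains the nonzero entry $a_{jj}=1$), forces every column to be a scalar multiple of the first. Writing $a_{ij}=\lambda_j a_{i1}$ and reading off the diagonal gives $1=a_{jj}=\lambda_j a_{j1}$, whence each $a_{j1}\neq 0$ and $\lambda_j=1/a_{j1}$, so that $a_{ij}=a_{i1}/a_{j1}$ for all $i,j$.

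Next I would use the hypothesis $A\in\ell^{\infty}(\mathbb{N}^2)$ to show that the entries of the first column are bounded away from zero. Setting $M:=\sup_{i,j}|a_{ij}|<\infty$, the relation $|a_{i1}/a_{j1}|\leq M$ gives $|a_{i1}|\leq M|a_{j1}|$ for all $i,j$; taking the supremum over $i$ and then the infimum over $j$ yields $\sup_i|a_{i1}|\leq M\inf_j|a_{j1}|$. Since $a_{11}=1$, the left-hand side is at least $1$, so $m:=\inf_j|a_{j1}|>0$, and in particular $|a_{i1}|\geq m$ for every $i$.

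Finally I would examine the $\ell^2$ norm of the first column. If $A$ were a bounded operator on $\ell^2(\mathbb{N})$, then $A e_1$ would lie in $\ell^2$ and coincide with the first column $(a_{i1})_i$; but $\sum_i|a_{i1}|^2\geq\sum_i m^2=\infty$, so the first column is not square-summable. This contradiction shows that $A$ does not define a bounded operator, i.e. $A$ is unbounded in the operator norm. The same estimate applied to the $j$-th column, $\sum_i|a_{ij}|^2=|a_{j1}|^{-2}\sum_i|a_{i1}|^2=\infty$, in fact shows that \emph{every} column fails to be square-summable.

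I expect the only delicate point to be the interpretation and consequences of ``linearly dependent columns''; once that is translated into the proportionality relation $a_{ij}=a_{i1}/a_{j1}$, the bounded-away-from-zero estimate and the divergence of the column norm are routine. The contrast worth emphasizing is that although the entrywise bound $A\in\ell^{\infty}(\mathbb{N}^2)$ makes the Schur map $S_A$ bounded on $K(H)$ by the earlier lemma, the matrix $A$ itself cannot represent a bounded operator on $\ell^2$.
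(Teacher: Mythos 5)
Your proof is correct, but it follows a genuinely different route from the paper's. The paper applies Theorem \ref{main} to the $n\times n$ corners $A_n$: rank one plus unit diagonal forces $n\in\Spec(A_n)$, so there is a unit vector $X'$ (an eigenvector of $A_n$ padded with zeros) with $\|AX'\|\geq n$, and letting $n\to\infty$ shows $A$ is unbounded. You instead translate the hypothesis into the proportionality relation $a_{ij}=a_{i1}/a_{j1}$, use the entrywise bound $M=\sup|a_{ij}|<\infty$ together with $a_{11}=1$ to get $\inf_j|a_{j1}|\geq 1/M>0$, and conclude that $Ae_1$ (the first column) cannot lie in $\ell^2$ --- a necessary condition for boundedness. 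Both arguments are sound; note that both of you must read ``linearly dependent columns'' as ``pairwise linearly dependent'' (rank one), since the literal reading admits bounded counterexamples, and your derivation of $a_{ij}=a_{i1}/a_{j1}$ correctly handles the nonvanishing of $a_{j1}$ via $a_{jj}=1$. Your approach is more elementary (no spectral theory) and yields the stronger conclusion that \emph{every} column fails to be square-summable, while the paper's approach gives the quantitative growth $\|A_n\|\geq n$ of the finite corners and, notably, does not actually use the hypothesis $A\in\ell^{\infty}(\mathbb{N}^2)$, which your lower-bound step does require.
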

\begin{proof}

Choosing an arbitrary $n\in\mathbb{N}$, consider the $n$-by-$n$ top right corner of $A$ and denote it by $A_{n}$. By Theorem \eqref{main}, we can find an eigenvector $X_n$ such that $A_{n}X_n=n X_n$. Construct the vector $X^{\prime}$ such that $x^{\prime}_{i1}=x_{i1}$ for all $1\leq i \leq n$, with the other entries set to zero. We can multiply $X'$ by a scalar such that the norm of $X'$ is 1. But, $\|AX'\|\geq n.$ Thus, we have a sequence of elements of the unit ball upon which $A$ is unbounded, or in other words, $A$ is not bounded in the operator norm.

\end{proof}

\section{Relation to extreme points}


A linear completely positive map between von Neumann algebras is called normal if it is continuous with respect to the $\sigma$-weak topologies on domain and range. A Markov map is a linear normal trace preserving unital completely positive map from $M_{n}(  \mathbb{C}  )$ to $M_{n}(  \mathbb{C}  )$ and we use the notation $D(M_{n}(  \mathbb{C}  ))$ for the set of all Markov maps.
A positive matrix with unit diagonal entries is called a correlation matrix and the set of all correlation matrices is denoted with $\xi_{n}(  \mathbb{C}  )$.

\begin{prop}
\cite[Lemma 2.4]{BPS} Let $A\in M_{n}(  \mathbb{C}  )$ and $S_{A}:  M_{n}(  \mathbb{C}  ) \longrightarrow M_{n}(  \mathbb{C}  )$ be a Schur map. Then $A$ is an extreme point of $\xi_{n}(  \mathbb{C}  )$ if and only if $S_{A}$ is an extreme point of $D(M_{n}(  \mathbb{C}  ))$.
\end{prop}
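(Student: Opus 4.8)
The plan is to reduce everything to a statement about a convex body of positive matrices via the Choi--Jamio\l kowski correspondence, where the desired equivalence becomes the assertion that the Schur channels form a \emph{face}.

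First I would record the dictionary between the two sides. A Schur map $S_A$ is completely positive precisely when $A\ge0$ (the Schur product theorem, as used via \cite[Theorem 3.7]{VP}), it is unital precisely when $a_{ii}=1$, and the identity $\Tr(S_A(B))=\sum_i a_{ii}b_{ii}$ shows it is trace preserving precisely when $a_{ii}=1$; normality is automatic in finite dimensions. Hence $S_A\in D(M_n(\mathbb C))$ if and only if $A\in\xi_n(\mathbb C)$, and $A\mapsto S_A$ is an affine injection of $\xi_n(\mathbb C)$ into $D(M_n(\mathbb C))$. One implication of the proposition is then immediate: if $A=\tfrac12(B_1+B_2)$ with $B_i\in\xi_n(\mathbb C)$, then $S_A=\tfrac12(S_{B_1}+S_{B_2})$, so extremality of $S_A$ forces $S_{B_1}=S_{B_2}=S_A$ and hence $B_1=B_2=A$. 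The whole difficulty lies in the other direction, where a putative splitting $S_A=\tfrac12(\Phi+\Psi)$ may a priori involve maps $\Phi,\Psi\in D(M_n(\mathbb C))$ that are not Schur maps at all.

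To control such splittings I would pass to Choi matrices. The assignment $\Phi\mapsto C_\Phi:=\sum_{ij}E_{ij}\otimes\Phi(E_{ij})$ is an affine isomorphism of $D(M_n(\mathbb C))$ onto the body $\mathcal C:=\{C\ge0:\Tr_1 C=\Tr_2 C=I\}$, so it suffices to decide extremality there. A direct computation gives $C_{S_A}=\sum_{ij}a_{ij}\,E_{ij}\otimes E_{ij}=VAV^{\ast}$, where $V\colon\mathbb C^n\to\mathbb C^n\otimes\mathbb C^n$ is the isometry $Ve_i=e_i\otimes e_i$ onto the \emph{diagonal subspace} $\mathcal D:=\operatorname{span}\{e_i\otimes e_i\}$. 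Thus $C_{S_A}$ is supported on $\mathcal D$; conversely a positive $C$ supported on $\mathcal D$ is of the form $VBV^{\ast}$, and since $\Tr_1(VBV^{\ast})=\Tr_2(VBV^{\ast})=\operatorname{diag}(b_{11},\dots,b_{nn})$, the constraints defining $\mathcal C$ say exactly that $B$ is a correlation matrix. In other words $\mathcal F:=\{C\in\mathcal C:\operatorname{ran}C\subseteq\mathcal D\}$ is affinely isomorphic, via $B\mapsto VBV^{\ast}$, to $\xi_n(\mathbb C)$.

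The key remaining step---and the part I expect to be the main obstacle, since it is the only place genuine positivity enters---is to show that $\mathcal F$ is a face of $\mathcal C$. This rests on the elementary support lemma for positive operators: if $C_1,C_2\ge0$ then $\ker(C_1+C_2)=\ker C_1\cap\ker C_2$, so whenever $C=\tfrac12(C_1+C_2)$ with $C\in\mathcal F$ one gets $\operatorname{ran}C_i=(\ker C_i)^{\perp}\subseteq(\ker C)^{\perp}=\operatorname{ran}C\subseteq\mathcal D$, forcing $C_1,C_2\in\mathcal F$. Granting this, the standard fact that the extreme points of a face are exactly the extreme points of the ambient body that lie in the face finishes the proof: $C_{S_A}$ is extreme in $\mathcal C$ if and only if it is extreme in $\mathcal F$, which under $\mathcal F\cong\xi_n(\mathbb C)$ is precisely the assertion that $A$ is extreme in $\xi_n(\mathbb C)$. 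As a cross-check, one can instead argue directly with the blocks of $C_\Phi$: the splitting gives $\Phi(E_{ii})=E_{ii}$ because rank-one density matrices are extreme, and then positivity of the principal submatrix $\bigl(\begin{smallmatrix}E_{ii}&\Phi(E_{ij})\\ \Phi(E_{ji})&E_{jj}\end{smallmatrix}\bigr)$ of $C_\Phi$ forces $\Phi(E_{ij})$ to be a scalar multiple of $E_{ij}$, so $\Phi$ is itself a Schur map.
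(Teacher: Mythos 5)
Your argument is correct, but note that the paper itself offers no proof of this proposition: it is imported verbatim as \cite[Lemma 2.4]{BPS}, so there is no in-paper argument to compare against. Judged on its own terms, your proof is complete. The dictionary ($S_A$ Markov iff $A\in\xi_n(\mathbb C)$), the identification $C_{S_A}=VAV^{\ast}$ with $\mathcal F\cong\xi_n(\mathbb C)$, the support lemma $\ker(C_1+C_2)=\ker C_1\cap\ker C_2$ showing $\mathcal F$ is a face, and the standard fact about extreme points of faces all check out, and the face argument conveniently delivers both implications at once. It is worth pointing out that your closing ``cross-check'' is in substance the original Bhat--Pati--Sunder proof: one shows $\Phi(E_{ii})=E_{ii}$ by extremality of rank-one density matrices and then uses positivity of the $2\times 2$ block $\bigl(\begin{smallmatrix}E_{ii}&\Phi(E_{ij})\\ \Phi(E_{ji})&E_{jj}\end{smallmatrix}\bigr)$ to force $\Phi(E_{ij})=E_{ii}\Phi(E_{ij})E_{jj}\in\mathbb C E_{ij}$, i.e.\ any Markov map occurring in a convex decomposition of a Schur Markov map is itself a Schur map. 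Your main route via the Choi body buys a cleaner structural statement (the Schur--Markov maps form a face of $D(M_n(\mathbb C))$, affinely isomorphic to $\xi_n(\mathbb C)$), which is slightly stronger than the proposition itself and generalizes more readily; the block-matrix route is more elementary and avoids introducing the Choi isomorphism. Either one stands alone as a valid proof.
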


\begin{rmk}

If $S_{A}:  M_{n}(  \mathbb{C}  ) \longrightarrow M_{n}(  \mathbb{C}  )$ is $\ast$-preserving and multiplicative then by Theorem \eqref{mainp}, the entries of $A$ are the extreme points of the closed unit disk. In addition, $A$ is a rank one correlation matrix and consequently by \cite[Theorem 1]{LT} it is an extreme point of $\xi_{n}(  \mathbb{C}  )$. However, this can also be proved by the above proposition. Note that by \cite[Theorem 3.5]{ES}, $S_{A}$ is an extreme point of the set of all positive unital Schur maps. In other words, it is an extreme Markov map and the above proposition implies that $A$ is an extreme correlation matrix.
\end{rmk}

\begin{prop}
Let $S_{A}:B(H)\longrightarrow B(H)$ be a Schur map with no kernel, where $A$ is the matrix of an operator in
the unit ball $B(H)_{1}$ of $B(H)$. If $S_{A}$ belongs to an extremal ray of the set of Schur maps, then $A$ is either a scalar multiple of an isometry or a scalar multiple of a co-isometry.
\end{prop}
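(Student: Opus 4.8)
The plan is to reduce the assertion to the classical description of the extreme points of the operator unit ball. The first and organizing observation is that the assignment $A\mapsto S_A$ is linear and injective (indeed $S_A=0$ forces $a_{ij}=0$ for all $i,j$), so it is an isomorphism from symbols to Schur maps that carries all convex and extremal structure faithfully. Under this identification a point $S_A$ generates an extremal ray of the set of Schur maps exactly when its symbol $A$ generates an extremal ray of the corresponding set of symbols; since $A$ is taken in the unit ball $B(H)_1$, this says precisely that $A$ is a nonnegative scalar multiple of an extreme point of $B(H)_1$. The hypothesis that $S_A$ has no kernel is used here only to guarantee that $A\neq 0$, so that the ray in question is nondegenerate. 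Thus it suffices to identify the extreme points of $B(H)_1$ and to check that the reduction is compatible with remaining inside the Schur maps.

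For the identification I would invoke Kadison's characterization of the extreme points of the unit ball of a $C^\ast$-algebra: in $B(H)$ these are exactly the maximal partial isometries, that is, the isometries ($A^\ast A=1$) and the co-isometries ($AA^\ast=1$). Granting the reduction, this already yields the stated dichotomy. The point that requires genuine care, however, is that the witness to non-extremality must itself be a \emph{bounded} Schur multiplier; the perturbation furnished by the abstract theory need not be one. I would therefore argue by contraposition and, crucially, produce a perturbation of finite rank.

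So suppose, after normalizing to $\|A\|=1$, that $A$ is neither an isometry nor a co-isometry; then both defect operators $1-A^\ast A$ and $1-AA^\ast$ are nonzero. I would construct a nonzero rank-one $W$ with $\|A\pm W\|\le 1$ in two cases. If the spectrum of $A^\ast A$ meets the open interval $(0,1)$, pick a spectral projection $E=\mathbf 1_{[a,b]}(A^\ast A)$ with $[a,b]\subset(0,1)$ and a unit vector $\xi\in\operatorname{ran}E$, and set $W=\varepsilon\,(A\xi)\,\xi^\ast$; since $1-A^\ast A\ge(1-b)E$ dominates the rank-one defect $(A+W)^\ast(A+W)-A^\ast A$ on $\operatorname{ran}E$ for small $\varepsilon$, one gets $\|A\pm W\|\le1$. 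Otherwise $A^\ast A$ is a projection, so $A$ is a partial isometry with $\ker A\neq 0$ and $\ker A^\ast\neq 0$; choosing unit vectors $\xi\in\ker A$ and $\eta\in\ker A^\ast$ and setting $W=\varepsilon\,\eta\,\xi^\ast$ kills the cross terms $A^\ast W$ and $W^\ast A$, leaving $(A\pm W)^\ast(A\pm W)=A^\ast A+\varepsilon^2\xi\xi^\ast\le 1$. In either case $W$ is rank one, hence $S_W$ is a bounded Schur map, so $S_{A\pm W}=S_A\pm S_W$ lie in the set of Schur maps and $S_A=\tfrac12 S_{A+W}+\tfrac12 S_{A-W}$ displays $S_A$ as a proper midpoint, contradicting that it spans an extremal ray.

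The main obstacle is exactly this finiteness requirement: the naive route of quoting Kadison's theorem does not close, because the perturbation of a non-extreme contraction it provides can fail to be a bounded Schur multiplier, and one must instead show by hand that a rank-one perturbation is always available. The case split above --- slack in the singular values versus a genuinely non-extreme partial isometry --- together with the accompanying defect-operator estimates is where the real work concentrates. A lesser, but necessary, point is to confirm that the linear isomorphism $A\mapsto S_A$ really matches extremal rays on the two sides and that the normalization $\|A\|=1$ is harmless, being absorbed into the word ``scalar multiple'' in the conclusion.
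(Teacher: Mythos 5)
Your argument is correct and lands on the same final fact as the paper (Kadison's description of the extreme points of $B(H)_1$ as the isometries and co-isometries), but the middle step is genuinely different. The paper argues in the forward direction: it takes an arbitrary convex decomposition $A=\lambda B+(1-\lambda)C$ with $B,C\in B(H)_1$, lifts it to $S_A=\lambda S_B+(1-\lambda)S_C$, uses extremality of $S_A$ together with injectivity of the correspondence $A\mapsto S_A$ (tested on matrix units $E_{mn}$) to force $B=C$, and concludes that $A$ is extreme in $B(H)_1$. That is shorter, but it tacitly asserts that $S_B$ and $S_C$ lie in the given convex set of Schur maps from $B(H)$ to $B(H)$ --- precisely the point you flag as delicate, since an arbitrary contraction need not be the symbol of a bounded Schur multiplier of $B(H)$. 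Your contrapositive closes that issue by producing a rank-one perturbation $W=\varepsilon\,\eta\,\xi^{\ast}$, for which $S_W=D_\eta(\cdot)D_{\overline{\xi}}$ with bounded diagonals $D_\eta,D_{\overline{\xi}}$ is automatically a bounded Schur multiplier, at the cost of essentially re-proving the hard half of Kadison's theorem with the added constraint that the witness to non-extremality be finite rank. Your two defect-operator cases are sound; the only slip is cosmetic: in the first case the defect $(A+W)^{\ast}(A+W)-A^{\ast}A=A^{\ast}W+W^{\ast}A+W^{\ast}W$ has rank two rather than one, but it is still supported in the range of $E$ and of norm $O(\varepsilon)$, so the domination by $(1-b)E\le 1-A^{\ast}A$ goes through. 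In short, the paper's proof is the soft lifting argument and yours is a hardened, self-contained version that remains valid under the stricter reading of ``the set of Schur maps'' as the bounded Schur multipliers of $B(H)$.
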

\begin{proof}
By considering the unit ball, we may work with extreme points rather than extremal rays. Suppose $S_{A}$ is extreme and $ A=\lambda B + (1-\lambda) C$ where $\lambda \in (0,1)$ and $B,C$ are matrices of operators in the unit ball $B(H)_{1}$. By the properties of Schur maps, $S_{B}$ and $S_{C}$ are in the unit ball of the space of all Schur maps from $B(H)$ to $B(H$) and $S_{A}=\lambda S_{B} + (1-\lambda) S_{C}$. Since $S_{A}$ is extreme, $S_{B}=S_{C}$. Therefore, $S_{B-C}$ is the zero map. We claim that $B=C$ because if $B-C\neq0$ then it has a nonzero entry such as $d_{mn}$ and by applying $S_{B-C}$ to $E_{mn}$ we have $S_{B-C} (E_{mn} )\neq0$ which is a contradiction. Therefore $A$ is an extreme point of the unit ball of $B(H)$, and for $B(H)$ these are the isometries and co-isometries.
\end{proof}


\begin{thebibliography}{99}

    \bibitem{A}
    B. Aupetit, \emph{Spectrum-preserving linear mappings between Banach
    algebras or Jordan-Banach algebras}, J. London Math. Soc. 62, (2000) pp. 917--924.

    \bibitem{BPS}
     R. Bhat, V. Pati, and V.S. Sunder, \emph{On some convex sets and their extreme points}, Math. Ann. 296,(1993) pp. 637--648.

    \bibitem{C}
     M. D. Choi, \emph{A Schwarz inequality for Positive linear maps on $C^{*}$-algebras}, Illinois J. Math. 18 (1974) pp. 565--574.


     \bibitem{LT}
     C. K. Li and B. S. Tam, \emph{A note on extremal correlation matrices}, SIAM Journal on
      Matrix Analysis and Applications 15 (1994) pp. 903--908.


     \bibitem{VP}
     V. Paulsen, \emph{Completely bounded maps and operator algebras}, Volume 78 of Cambridge Studies in Advanced
     Mathematics, Press Syndicate of the University of Cambridge, Cambridge, UK, 2002.

     \bibitem{P}
     V. Pellegrini, \emph{Numerical range preserving operators on matrix algebras}, Studia Math. 54(1975), 143-147.

		
		\bibitem{Saaty}
		T. L. Saaty, \emph{ The Analytic Hierarchy Process,} Advanced Book Program,
		McGraw-Hill, New York, 1980.
		

    \bibitem{ES}
     E. St\o rmer, \emph{Positive linear maps of operator algebras}, Acta Math. 110 (1963) pp. 233--278.

	\bibitem{zhang} F. Zhang, \emph{Matrix Theory,} 2nd ed. , Springer, NY, 2011.

   \end{thebibliography}
\end{document}